\newtheorem{theorem}{Theorem}[section]
\newtheorem{corollary}[theorem]{Corollary}
\newtheorem{lemma}[theorem]{Lemma}
\newtheorem{lemma and definition}[theorem]{Lemma and Definition}
\newtheorem{proposition}[theorem]{Proposition}
\newtheorem{definition}[theorem]{Definition}
\newtheorem{exam}[theorem]{Example}
\newtheorem{remark}[theorem]{Remark}
\newtheorem{the construction}[theorem]{THE CONSTRUCTION}
\newcommand{\field}[1]{\mathbb{#1}}
\newcommand{\Z }{\field{Z}}
\newcommand{\N }{\field{N}}
\begin{document}
	
\title[Symmetric and Pseudo-Symmetric Numerical Semigroups]{Symmetric and Pseudo-Symmetric Numerical Semigroups via Young Diagrams and Their Semigroup Rings}
\author{Meral SÜER}
\address{Department of Mathematics, Faculty of Science and Letters, Batman University, Batman, Turkey}
\email{meral.suer@batman.edu.tr}
\author{Mehmet YEŞİL}
\address{Department of Mathematics, Faculty of Science and Letters, Batman University, Batman, Turkey}
\email[corresponding author]{mehmet-yesil@outlook.com}
\keywords{Symmetric numerical semigroups, Pseudo-symmetric numerical semigroups, Young diagrams, Semigroup rings}
\subjclass[2020]{20M14,20M20,20M25}
\maketitle

\begin{abstract}
	This paper studies Young diagrams of symmetric and pseudo-symmetric numerical semigroups and describes new operations on Young diagrams as well as numerical semigroups. These provide new decompositions of symmetric and pseudo-symmetric semigroups into an over semigroup and its dual. It is also given exactly for what kind of numerical semigroup $S$, the semigroup ring $\Bbbk[\![S]\!]$ has at least one Gorenstein subring and has at least one Kunz subring.
\end{abstract}

\section{Introduction}

Throughout this paper, $\Z$ and $\N$ will denote the set of integers and the set of positive integers, respectively. We also set $\N_{0}=\N\cup\{0\}$. A numerical set $S$ is a subset of $\N_{0}$ containing $0$ and having finite complement in $\N_{0}$. It is clear that $\N_{0}$ is a numerical set with empty complement. A numerical set $S$ is a numerical semigroup, if $x,y \in S \implies x+y\in S$.

Numerical semigroups have many applications in ring theory and algebraic geometry via the valuations of one-dimensional local Noetherian domains. Under certain conditions, these rings can be characterized in terms of their value groups which are numerical semigroups. In particular, in \cite{Ku}, Kunz proves that one dimensional analytically irreducible Noetherian local rings are Gorenstein if and only if their value semigroups are symmetric. Also in \cite{BDF}, it is proved that a numerical semigroup is pseudo-symmetric if and only if its semigroup ring is a Kunz ring.

Numerical semigroups can be visualised with Young diagrams\footnote{In literature, Young diagrams are also called Young tableaux.}. A Young diagram\footnote{A Young diagram is also defined as a series of top-aligned columns of boxes such that the number of boxes in each column is not less than the number of boxes in the column immediately to the right of it.} is a series of left aligned rows of boxes such that the number of boxes in each row is not less than the number of boxes in the row immediately below it. Young diagrams are combinatorial objects that are useful in several branches of mathematics including but not limited to representation theory. The connection between numerical sets and Young diagrams is given in \cite{KN} and \cite{HBN}. As an important application of these objects, in \cite{N1}, Arf numerical semigroups are characterised via their Young diagrams.

In this paper, we study the Young diagrams of symmetric and pseudo-symmetric numerical semigroups. We define new binary operations on Young diagrams and also on numerical semigroups which allow us to characterize symmetric and pseudo-symmetric numerical semigroups by another numerical semigroup and its dual. This is actually a decomposition of the numerical semigroup under consideration into an over semigroup and its dual. These decompositions seem to deserve further investigation because of the ring theoretic correspondence of symmetric and pseudo-symmetric semigroups. Furthermore, we provide characterizations of which semigroup rings having at least one Gorenstein subring and having at least one Kunz subring.

This paper is organised as follows. In section \ref{sec2}, we gather necessary background of numerical sets and Young diagrams that we use in latter sections. In section \ref{sec3}, we introduce new binary operations on Young diagrams and give their numerical set correspondences. In section \ref{sec4}, we interpret the newly defined operations on symmetric and pseudo-symmetric numerical semigroups and we prove some technical lemmas. In particular, we prove the main results, Theorem \ref{tps2}, Theorem \ref{tps1}, and Theorem \ref{thm} of this paper using these operations on numerical semigroups and their duals. Finally, in section \ref{sec5}, we interpret the results of this paper in correspondence with ring theory, and we characterize semigroups whose semigroup rings have at least one Gorenstein subring and have at least one Kunz subring.

\section{Numerical Sets and Young Diagrams}\label{sec2}

A numerical set $S$ is said to be proper, if $S\neq \N_{0}$. Let $S$ be a proper numerical set. We denote the complement of $S$ in $\N_{0}$ by $G(S)$. The elements of $G(S)$ are called gaps of $S$. The number of gaps of $S$ is called genus of $S$, and denoted by $g(S)$. The largest gap of $S$ is called the Frobenius number of $S$, and denoted by $F(S)$. The number $F(S)+1$ is called the conductor of $S$, and denoted by $C(S)$. $C(S)$ is the smallest element of $S$ such that $n\in\N_{0}$ and $n>C(S) \implies n\in S$. Note that $F(\N_{0})=-1$ and $C(N_{0})=0$.

The elements of a proper numerical set $S$ that are smaller than $C(S)$ are called the small elements of $S$. If $S$ has $n$ small elements, we list them as $0=s_{0}<s_{1}<\dots<s_{n-1}$, and we write $$S=\{0,s_{1},\dots,s_{n-1},s_{n}=C(S),\rightarrow\},$$ where the arrow at the end means that all integers greater than $C(S)$ belong to $S$.

\begin{exam}\label{ex1}
$S=\{ 0,2,3,6,8,9,11,\rightarrow \}$ is a numerical set which has the complement $G(S)=\{ 1,4,5,7,10 \}$ and genus $g(s)=5$. Notice that the Frobenius number of $S$ is $F(S)=10$, and the conductor of $S$ is $C(S)=11$.
\end{exam}

Let $Y$ be a Young diagram with $n$ columns and $k$ rows. The number of boxes in a column (or a row) is called the length of that column (or that row). The hook of a box in $Y$ is the shape formed by the boxes directly to the right of it, the boxes directly below it, and the box itself. The number of boxes in the hook of a box is called the hook length of that box.

\begin{exam}\label{ex2}
Here is an example of a Young diagram $Y$ with $6$ columns and $5$ rows in which the boxes that contains bullets forms the hook of the box lying in the second column and second row.
\vspace{1em}
\begin{center}
\tiny{\ytableaushort
{\none,\none\bullet\bullet\bullet, \none\bullet, \none\bullet}
* {6,4,3,3,1}}
\end{center}
\vspace{1em}
The hook length of that box is $5$.
\end{exam}

Let $S$ be a numerical set. We can construct a Young diagram $Y_{S}$ corresponding to $S$ by drawing a continuous polygonal path that starts from the origin in $\Z^{2}$. Starting with $s=0$,
\begin{enumerate}
	\item if $s \in S$, draw a line of unit length to the right,
	\item if $s \notin S$, draw a line of unit length to up,
\end{enumerate}
and repeat it for $s+1$. We continue this until $s=F(S)$. The lattice lying above this polygonal path and the horizontal line that is $g(S)$ units above the origin defines the corresponding Young diagram $Y_{S}$. It is clear that every Young diagram corresponds to a unique proper numerical set. Thus the correspondence $S\rightarrow Y_{S}$ is a bijection between the set of proper numerical sets and the set of Young diagrams. For example, the numerical set in Example \ref{ex1} corresponds to the Young diagram in Example \ref{ex2}.

Let $S=\{0,s_{1},\dots,s_{n-1},s_{n}=C(S),\rightarrow\}$ be a numerical set with corresponding Young diagram $Y_{S}$. By the construction, it is easy to see that $Y_{S}$ has $g(S)$ rows and $n$ columns. For each $i=0,1,\dots,n-1$, we identify the $i$th column of $Y_{S}$ with the set of hook lengths of boxes in it, denoted by $G_{i}(S)$, which corresponds to $s_{i}$. Also, the $i$th row from the bottom corresponds to the $i$th gap of $S$. The hook length of the box in the first column and $i$th row is the $i$th gap of $S$. Thus, $G_{0}(S)=G(S)$.

\begin{proposition} \label{prp} \cite[Section 2]{N1}
	Let $S=\{0,s_{1},\dots,s_{n-1},s_{n},\rightarrow\}$ be a numerical set with corresponding Young diagram $Y_{S}$. Then:
	\begin{enumerate}
		\item For each $i\in \{ 0,1,\dots,n-1 \}$, the hook length of the top box of the $i$th column of $Y_{S}$ is $F(S)-s_{i}$,
		\item $S$ is a numerical semigroup if and only if $G_{i}(S)\subseteq G_{0}(S)$ for each $i\in \{ 0,1,\dots,n-1 \}$.
	\end{enumerate}
\end{proposition}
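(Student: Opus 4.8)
The plan is to read everything off the defining lattice-path construction of $Y_S$. Write $G(S)=\{g_1<g_2<\dots<g_{g(S)}\}$ for the gaps. First I would observe that after the path has processed $0,1,\dots,k-1$ it sits at $\bigl(|S\cap[0,k)|,\,k-|S\cap[0,k)|\bigr)$; hence the right-step coming from $s_i$ runs at height $s_i-i$, and the up-step coming from the gap $g_r$ runs at horizontal coordinate $g_r-r+1$. Consequently the $i$th column of $Y_S$ (the one over the interval $[i,i+1]$) consists of the boxes at heights $s_i-i,\dots,g(S)-1$, so it has $c_i:=g(S)-s_i+i$ boxes and occupies rows $s_i-i+1,\dots,g(S)$ counted from the bottom, while the $r$th row from the bottom has length $g_r-r+1$. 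I would also record the elementary count that the number of gaps $\le s_c$ equals $s_c-|S\cap[0,s_c)|=s_c-c$, so that $g_r>s_c$ exactly when $r\ge s_c-c+1$ --- which is precisely the range of rows met by column $c$.

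For part (1), the top box of the $i$th column lies in the top row, the $g(S)$th row from the bottom, whose length is $g_{g(S)}-g(S)+1=F(S)-g(S)+1$ since $g_{g(S)}=F(S)$. Its hook is the disjoint union of the $c_i-1$ boxes below it in the $i$th column, the $\bigl(F(S)-g(S)+1\bigr)-1-i$ boxes of the top row to its right, and the box itself; adding these and substituting $c_i=g(S)-s_i+i$ telescopes to $F(S)-s_i$. The same count for a general box gives that the box in column $c$, row $r$ from the bottom, has hook length $g_r-s_c$; combining this with the row range $s_i-i+1\le r\le g(S)$ of column $i$, and with the fact that $\{g_r:\,s_i-i+1\le r\le g(S)\}$ is exactly the set of gaps strictly larger than $s_i$, yields
\[
G_i(S)=\{\,g-s_i:\ g\in G(S),\ g>s_i\,\},
\]
which for $i=0$ recovers $G_0(S)=G(S)$.

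Part (2) is then purely formal. If $S$ is a numerical semigroup and $g\in G(S)$ with $g>s_i$, then $g-s_i\notin S$, for otherwise $g=(g-s_i)+s_i\in S$; hence $g-s_i\in G(S)=G_0(S)$, so $G_i(S)\subseteq G_0(S)$ for every $i$. Conversely, assume all $G_i(S)\subseteq G_0(S)$ and suppose $a,b\in S$ with $a+b\notin S$. Both $a$ and $b$ must be small elements, since $a\ge C(S)$ would force $a+b\ge C(S)$ and hence $a+b\in S$; write $a=s_i$, $b=s_j$. Moreover $b\ne 0$ (otherwise $a+b=a\in S$), so $j\ge 1$ and $a+b>s_i$; since $a+b$ is a gap exceeding $s_i$, we get $s_j=(a+b)-s_i\in G_i(S)\subseteq G_0(S)=G(S)$, contradicting $s_j\in S$. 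Hence $S$ is a numerical semigroup. The only genuinely technical part is the first paragraph --- converting the path construction into the exact row and column lengths --- and even that is largely furnished by the facts already recorded before the statement; thereafter (1) is a one-line count and (2) involves no diagrams at all.
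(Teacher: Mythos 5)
Your proof is correct. Note that the paper itself gives no argument for this proposition---it is quoted with a citation to \cite[Section 2]{N1}---so there is no in-paper proof to compare against; your write-up is a self-contained derivation from the lattice-path construction. The key identity you extract, namely that the box in column $c$ and row $r$ (from the bottom) has hook length $g_r-s_c$, so that $G_i(S)=\{g-s_i : g\in G(S),\ g>s_i\}$, is exactly the combinatorial content the paper uses implicitly throughout (e.g.\ it reproduces the hook lengths $10,8,7,4,2,1$ and $10,7,5,4,1$ in the paper's Example 2.3, and $G_0(S)=G(S)$), and it makes both parts immediate: part (1) is the case $r=g(S)$, and part (2) reduces to the purely arithmetic observation that closure under addition of $S$ is equivalent to $s_i+s_j$ never being a gap, which is precisely the statement $s_j\notin G_i(S)$ for all small $s_i,s_j$. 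The coordinate bookkeeping in your first paragraph (right-step of $s_i$ at height $s_i-i$, up-step of $g_r$ at abscissa $g_r-r+1$, hence column lengths $g(S)-s_i+i$ and row lengths $g_r-r+1$) is accurate, and the reduction of the converse in (2) to small elements via the conductor is handled correctly.
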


\begin{exam}
Let $S=\{ 0,2,3,6,8,9,11,\rightarrow \}$. $S$ has $6$ small elements, the complement $G(S)=\{ 1,4,5,7,10 \}$ and the genus $g(S)=5$. Therefore, the corresponding Young diagram $Y_{S}$ has $6$ columns and $5$ rows, and the hook lengths of the boxes of first column and first row in $Y_{S}$ are shown in the picture below.
\vspace{1em}
\begin{center}
\tiny{\ytableaushort
{{10}87421,7,5,4,1}
*{6,4,3,3,1}}
\end{center}
\vspace{1em}
\end{exam} 

\section{Discrete, End-to-end, Conjoint and Overlap Sums}\label{sec3}
In this section, we define new operations on Young diagrams, and we give the numerical set correspondence of these operations. Henceforth, all numerical sets are proper.

\begin{definition}
	Let $Y$ be a Young diagram with $n$ columns and $k$ rows, $Z$ be a Young diagram with $m$ columns and $l$ rows. Glueing $Z$ above $Y$ as putting a row of boxes of length $n$ above $Y$ and then uniting the top right corner of this row and the bottom left corner of the first column of $Z$ is called the discrete sum of $Y$ and $Z$, denoted by $Y\boxplus_{D}Z$, which is a Young diagram with $n+m$ columns and $k+l+1$ rows.
\end{definition}

Next example illustrates the discrete sum of two Young diagrams.

\begin{exam}
	Let $Y$ and $Z$ be Young diagrams with $3$ columns and $3$ rows as below. Then $Y\boxplus_{D}Z$ is a Young diagram with $6$ columns and $7$ rows shown as below.
	\vspace{1em}
	\begin{center}
		\tiny{\begin{tabular}{ccccc}
				\ydiagram[*(yellow) ]{3,2,1}&$\boxplus_{D}$&\ydiagram[*(red) ]{3,1,1}&$=$&\ydiagram[*(yellow) \uparrow]
				{0,0,0,0,3,0,0}
				*[*(orange) \nearrow]
				{0,0,0,2+1,0,0}
				*[*(orange) \downarrow]
				{0,0,0,3,0,0,0}
				*[*(red) \swarrow]
				{0,0,3+1,0,0,0,0}
				*[*(red) ]{3+3,3+1,3+1}*[*(orange) ]{0,0,0,3}*[*(yellow) ]{0,0,0,0,3,2,1}*[*(white) ]{6,4,4,3,3,2,1}\\
				$\underbrace{\hspace{4.8em}}_{\displaystyle Y}$&&$\underbrace{\hspace{4.8em}}_{\displaystyle Z}$&&$\underbrace{\hspace{9.6em}}_{\displaystyle Y\boxplus_{D}Z}$
		\end{tabular}}
	\end{center}
\vspace{1em}
\end{exam}

\begin{definition}
	Let $Y$ be a Young diagram with $n$ columns and $k$ rows, $Z$ be a Young diagram with $m$ columns and $l$ rows. Glueing $Z$ above $Y$ as uniting the top right corner of the first row of $Y$ and the bottom left corner of the first column of $Z$ is called the end-to-end sum of $Y$ and $Z$, denoted by $Y\boxplus_{E}Z$, which is a Young diagram with $n+m$ columns and $k+l$ rows. 
\end{definition}

Next example illustrates the end-to-end sum of two Young diagrams.

\begin{exam}
	Let $Y$ and $Z$ be Young diagrams with $3$ columns and $3$ rows as below. Then $Y\boxplus_{E}Z$ is a Young diagram with $6$ columns and $6$ rows shown as below.
	\vspace{1em}
	\begin{center}
		\tiny{\begin{tabular}{ccccc}
				\ydiagram[*(yellow) ]{3,2,1}&$\boxplus_{E}$&\ydiagram[*(red) ]{3,1,1}&$=$&\ydiagram[*(yellow) \nearrow]
				{0,0,0,2+1,0,0}
				*[*(red) \swarrow]
				{0,0,3+1,0,0,0}
				*[*(red) ]{3+3,3+1,3+1}*[*(yellow) ]{0,0,0,3,2,1}*[*(white) ]{6,4,4,3,2,1}\\
				$\underbrace{\hspace{4.8em}}_{\displaystyle Y}$&&$\underbrace{\hspace{4.8em}}_{\displaystyle Z}$&&$\underbrace{\hspace{9.6em}}_{\displaystyle Y\boxplus_{E}Z}$
		\end{tabular}}
	\end{center}
\vspace{1em}
\end{exam}

\begin{definition}
Let $Y$ be a Young diagram with $n$ columns and $k$ rows, $Z$ be a Young diagram with $m$ columns and $l$ rows. Glueing $Z$ above $Y$ as putting the first column of $Z$ on top of the last column $Y$ is called the conjoint sum of $Y$ and $Z$, denoted by $Y\boxplus_{C}Z$, which is a Young diagram with $n+m-1$ columns and $k+l$ rows.  
\end{definition}

Next example illustrates the conjoint sum of two Young diagrams.

\begin{exam}
Let $Y$ and $Z$ be Young diagrams with $3$ columns and $3$ rows as below. Then $Y\boxplus_{C}Z$ is a Young diagram with $5$ columns and $6$ rows shown as below.
\vspace{1em}
\begin{center}
\tiny{\begin{tabular}{ccccc}
\ydiagram[*(yellow) ]{3,2,1}&$\boxplus_{C}$&\ydiagram[*(red) ]{3,1,1}&$=$&\ydiagram[*(yellow) \uparrow]
{0,0,0,2+1,0,0}
*[*(red) \downarrow]
{0,0,2+1,0,0,0}
*[*(red) ]{2+3,2+1,2+1}*[*(yellow) ]{0,0,0,3,2,1}*[*(white) ]{5,3,3,3,2,1}\\
$\underbrace{\hspace{4.8em}}_{\displaystyle Y}$&&$\underbrace{\hspace{4.8em}}_{\displaystyle Z}$&&$\underbrace{\hspace{8.0em}}_{\displaystyle Y\boxplus_{C}Z}$
\end{tabular}}
\end{center}
\vspace{1em}
\end{exam}

\begin{definition}
	Let $Y$ be a Young diagram with $n$ columns and $k$ rows, $Z$ be a Young diagram with $m$ columns and $l$ rows. Glueing $Z$ above $Y$ as overlapping the last box of first column of $Z$ and the last box of the first row of $Y$ is called the overlap sum of $Y$ and $Z$, denoted by $Y\boxplus_{O}Z$, which is a Young diagram with $n+m-1$ columns and $k+l-1$ rows. 
\end{definition}

Next example illustrates the overlap sum of two Young diagrams.

\begin{exam}
	Let $Y$ and $Z$ be Young diagrams with $3$ columns and $3$ rows as below. Then $Y\boxplus_{O}Z$ is a Young diagram with $5$ columns and $5$ rows shown as below.
	\vspace{1em}
	\begin{center}
		\tiny{\begin{tabular}{ccccc}
				\ydiagram[*(yellow) ]{3,2,1}&$\boxplus_{O}$&\ydiagram[*(red) ]{3,1,1}&$=$&\ydiagram[*(yellow) \rightarrow]
				{0,0,1+1,0,0}
				*[*(red) \downarrow]
				{0,2+1,0,0,0}
				*[*(red) ]{2+3,2+1}*[*(yellow) ]{0,0,2,2,1}*[*(orange) ]{0,0,2+1,0,0}*[*(white) ]{5,3,3,2,1}\\
				$\underbrace{\hspace{4.8em}}_{\displaystyle Y}$&&$\underbrace{\hspace{4.8em}}_{\displaystyle Z}$&&$\underbrace{\hspace{8.0em}}_{\displaystyle Y\boxplus_{O}Z}$
		\end{tabular}}
	\end{center}
\vspace{1em}
\end{exam}

Because of the bijection between the set of Young diagrams and the set of proper numerical sets, one can define discrete, end-to-end, conjoint and overlap sums for numerical sets as well.

If we are given two numerical sets $S$ and $T$ with corresponding Young diagrams $Y_{S}$ and $Y_{Z}$, respectively, we think of discrete, end-to-end, conjoint and overlap sums of $S$ and $T$ as the corresponding numerical sets to discrete, end-to-end, conjoint and overlap sums of $Y_{S}$ and $Y_{Z}$, respectively. 

If we add $Y_{Z}$ to $Y_{S}$ conjointly, we actually lift $Y_{Z}$ the number of rows of $Y_{S}$ times up and move it the number of columns of $Y_{S}$ minus one times to the right. With this process we move $Y_{Z}$ exactly the number of the hook length of the top left box in $Y_{S}$ times. Now if we think of the new polygonal path that have been constructed, we have made the conductor of $S$ and $0$ in $T$ vanished, and we have added $F(S)$ to the non-zero elements of $T$. Therefore, the small elements of the corresponding numerical set to the young diagram of $Y_{S}\boxplus_{C}Y_{T}$ consist of the small elements of $S$ less than the conductor of $S$ and the non-zero small elements of $T$ added $F(S)$. Also the conductor of $Y_{S}\boxplus_{C}Y_{T}$ is the conductor of $T$ added $F(S)$. In similar ways, we may talk about the construction of discrete, end-to-end and overlap sums of $S$ and $T$.

\begin{definition}
	Let $S=\{0,s_{1},\dots,s_{n},\rightarrow\}$ and $T=\{0,t_{1},\dots,t_{m},\rightarrow\}$ be numerical sets. The discrete sum of $S$ and $T$ is the numerical set 
	$$S\boxplus_{D}T=\{0,s_{1},\dots,s_{n-1},s_{n}+1,t_{1}+s_{n}+1,\dots,t_{m}+s_{n}+1,\rightarrow\},$$
	the end-to-end sum of $S$ and $T$ is the numerical set 
	$$S\boxplus_{E}T=\{0,s_{1},\dots,s_{n},t_{1}+s_{n},\dots,t_{m}+s_{n},\rightarrow\},$$
	the conjoint sum of $S$ and $T$ is the numerical set $$S\boxplus_{C}T=\{0,s_{1},\dots,s_{n-1},t_{1}+s_{n}-1,\dots,t_{m}+s_{n}-1,\rightarrow\},$$
	and the overlap sum of $S$ and $T$ is the numerical set 
	$$S\boxplus_{O}T=\{0,s_{1},\dots,s_{n-1},t_{1}+s_{n}-2,\dots,t_{m}+s_{n}-2,\rightarrow\}.$$
\end{definition}

Next lemma gives the most important properties of newly defined sums of numerical sets whose proofs are just by definitions.

\begin{lemma} \label{l2}
	Let $S=\{0,s_{1},\dots,s_{n},\rightarrow\}$ and $T=\{0,t_{1},\dots,t_{m},\rightarrow\}$ be numerical sets with $G(S)=\{ a_{1},\dots,a_{k} \}$ and $G(T)=\{ b_{1},\dots,b_{l} \}$, respectively. Then
	\begin{enumerate}
		\item $G(S\boxplus_{D}T)=\{ a_{1},\dots,a_{k}, a_{k}+1,a_{k}+b_{1}+2,\dots,a_{k}+b_{l}+2 \}$ and $F(S\boxplus_{D}T)=a_{k}+b_{l}+2$,
		\item $G(S\boxplus_{E}T)=\{ a_{1},\dots,a_{k},a_{k}+b_{1}+1,\dots,a_{k}+b_{l}+1 \}$ and $F(S\boxplus_{E}T)=a_{k}+b_{l}+1$,
		\item $G(S\boxplus_{C}T)=\{ a_{1},\dots,a_{k},a_{k}+b_{1},\dots,a_{k}+b_{l} \}$ and $F(S\boxplus_{C}T)=a_{k}+b_{l}$,
		\item $G(S\boxplus_{O}T)=\{ a_{1},\dots,a_{k-1},a_{k}+b_{1}-1,\dots,a_{k}+b_{l}-1 \}$ and $F(S\boxplus_{O}T)=a_{k}+b_{l}-1$.
	\end{enumerate} 
\end{lemma}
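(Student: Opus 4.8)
The plan is to verify the four statements one at a time, reading them directly off the explicit lists of small elements and conductors recorded in the Definition immediately preceding the lemma; the four arguments are parallel, differing only by a translation constant and by a single adjustment at one boundary. Before starting I would note two elementary facts about a proper numerical set $S=\{0,s_{1},\dots,s_{n-1},s_{n},\rightarrow\}$ with $G(S)=\{a_{1}<\dots<a_{k}\}$, both immediate from the definitions of conductor, Frobenius number and genus: first, $s_{n}=C(S)=F(S)+1=a_{k}+1$; second, $\{0,1,\dots,a_{k}\}$ is the disjoint union of $\{0,s_{1},\dots,s_{n-1}\}$ and $\{a_{1},\dots,a_{k}\}$. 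The same facts hold for $T$, so in particular $t_{m}=b_{l}+1$.

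Fix one of the four operations and write $U$ for the resulting numerical set. First I would take $C(U)$ from the Definition, use $F(U)=C(U)-1$, and substitute $s_{n}=a_{k}+1$ and $t_{m}=b_{l}+1$; this gives the asserted Frobenius number in each of the four cases. Then I would identify $G(U)=\{1,\dots,F(U)\}\setminus E$, where $E$ denotes the displayed list of small elements of $U$, by cutting the interval $\{1,\dots,F(U)\}$ at the point where the ``$S$-part'' of $E$ ends and its (translated) ``$T$-part'' begins: concretely at $s_{n}$ for $\boxplus_{D}$ and $\boxplus_{E}$, and at $a_{k}=s_{n}-1$ for $\boxplus_{C}$ and $\boxplus_{O}$. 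On the lower side the members of $E$ are just $s_{1},\dots,s_{n-1}$ (together with $s_{n}$ itself in the case $\boxplus_{E}$), so by the partition recalled above the gaps below the cut are $G(S)\cup\{a_{k}+1\}$ for $\boxplus_{D}$ (the slot $s_{n}=a_{k}+1$ having been vacated), $G(S)$ for $\boxplus_{E}$ and $\boxplus_{C}$, and $G(S)\setminus\{a_{k}\}$ for $\boxplus_{O}$. On the upper side $E$ is a translate of (most of) the small elements of $T$ by a constant $c\in\{s_{n}+1,\ s_{n},\ s_{n}-1,\ s_{n}-2\}$, so by the partition for $T$ the gaps above the cut form the corresponding translate of $G(T)$, namely $\{a_{k}+b_{1}+c',\dots,a_{k}+b_{l}+c'\}$ with $c'=c-a_{k}\in\{2,1,0,-1\}$. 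Taking the union of the two sides reproduces the stated gap set, and its largest element is exactly the Frobenius number found at the start.

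The computation itself is routine; the only place that needs care is the junction, where one must decide case by case whether the cut value lies in $U$ and on which side of the cut it gets counted. The one point worth flagging is the overlap sum, where $a_{k}=F(S)$ is no longer a leftover gap of $S$ but is instead recovered as $a_{k}+b_{1}-1$ inside the translated copy of $G(T)$ precisely when $b_{1}=1$; writing the answer uniformly as $\{a_{1},\dots,a_{k-1},a_{k}+b_{1}-1,\dots,a_{k}+b_{l}-1\}$ makes this case distinction vanish. In the final write-up I would carry out $\boxplus_{D}$ in full and then observe that $\boxplus_{E}$, $\boxplus_{C}$ and $\boxplus_{O}$ follow by the identical splitting argument with the constant $c$ and the boundary adjustment changed as indicated.
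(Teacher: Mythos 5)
Your argument is correct and is essentially the paper's own approach: the paper offers no written proof, stating only that the claims follow ``just by definitions,'' and your computation is a careful, correct elaboration of exactly that, including the right handling of the junction point (in particular the $a_{k}$ boundary case for $\boxplus_{O}$).
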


Notice that these sums of numerical sets are closed non-commutative binary operations on the set of proper numerical sets. Furthermore, it is easy to see that $\{0,2,\rightarrow\}$ is the identity element for the overlap sum of numerical sets. However, next proposition shows that conjoint and discrete sums are not closed on the set of proper numerical semigroups. 

\begin{proposition} \label{p1}
	Let $S=\{0,s_{1},\dots,s_{n},\rightarrow\}$ be a numerical semigroup with $G(S)=\{ a_{1},\dots,a_{k} \}$ where $a_{k}=F(S)$. If $s_{n}$ is not a minimal generator for $S$, then $S\boxplus_{C}S$ and $S\boxplus_{D}S$ are only numerical sets. 
\end{proposition}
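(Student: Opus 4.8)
The plan is to show that each of $S\boxplus_{C}S$ and $S\boxplus_{D}S$ --- both of which are numerical sets by the definition of these sums --- fails the closure axiom of a numerical semigroup, by exhibiting two of its elements whose sum is one of its gaps.

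First I would unpack the hypothesis. In the paper's convention $S=\{0,s_{1},\dots,s_{n},\rightarrow\}$ means $s_{n}=C(S)=F(S)+1=a_{k}+1$, and the statement that $s_{n}$ is not a minimal generator of $S$ says exactly that $s_{n}=p+q$ for some $p,q\in S$ with $p,q\ge 1$. Since $p+q=s_{n}$ and both summands are positive, $p,q<s_{n}=C(S)$, so $p$ and $q$ are small elements of $S$, i.e. $p,q\in\{s_{1},\dots,s_{n-1}\}$. Reading off the defining lists of the conjoint and discrete sums of numerical sets, the integers $0,s_{1},\dots,s_{n-1}$ are among the small elements of both $S\boxplus_{C}S$ and $S\boxplus_{D}S$; in particular $p$ and $q$ lie in each of these numerical sets.

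It remains to check that $s_{n}=p+q$ lies in neither sum. Since $S$ is a proper numerical semigroup, $1\notin S$, so $1$ is the least gap of $S$ and $a_{1}=1$. Then Lemma~\ref{l2}(3), applied with $T=S$ so that $b_{1}=a_{1}=1$, gives $a_{k}+b_{1}=a_{k}+1=s_{n}\in G(S\boxplus_{C}S)$, and Lemma~\ref{l2}(1) gives $a_{k}+1=s_{n}\in G(S\boxplus_{D}S)$. Hence in each of the two sums, $p$ and $q$ belong to the set but $p+q=s_{n}$ does not, so neither is closed under addition, and both are therefore only numerical sets. (One could equally well read this directly from the small-element lists: $s_{n}$ sits strictly between $s_{n-1}$ and the next listed small element in each sum, so it is a gap.)

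I do not expect a real obstacle here: the argument collapses to a one-line computation once the splitting $s_{n}=p+q$ is in hand. The only step that needs a moment's attention is confirming that $s_{n}$ falls out of both sums, which rests on the bookkeeping identities $a_{k}+1=C(S)=s_{n}$ and $a_{1}=1$ and on keeping the paper's conventions straight (in $S=\{0,s_{1},\dots,s_{n},\rightarrow\}$ the last listed small element $s_{n}$ is the conductor, and $a_{k}=F(S)=s_{n}-1$).
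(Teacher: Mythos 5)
Your proof is correct and follows essentially the same route as the paper: both exhibit $s_{n}=s_{i}+s_{j}$ as a sum of two small elements of each sum that is itself a gap of that sum, so closure fails. The only difference is cosmetic --- for the conjoint sum the paper argues by contradiction that $s_{n}$ would have to equal $s_{1}+s_{n}-1$, forcing $s_{1}=1$, whereas you identify $s_{n}=a_{k}+a_{1}$ directly as a gap via Lemma~\ref{l2}(3) together with $a_{1}=1$; the two observations are equivalent.
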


\begin{proof}
Suppose that $s_{n}$ is not a minimal generator for $S$, i.e. $s_{n}=s_{i}+s_{j}$ for some $i,j\in\{ 1,2,\dots,n-1\}$. Since $s_{n}=a_{k}+1\in G(S\boxplus_{D}S)$, the proof is straightforward for $S\boxplus_{D}S$. Now suppose also the contrary that $S\boxplus_{C}S=\{ 0,s_{1},\dots,s_{n-1},s_{1}+s_{n}-1,\dots,2s_{n}-1 \}$ is a numerical semigroup. Then $s_{n}=s_{i}+s_{j}\in S\boxplus_{C}S \implies s_{n}=s_{1}+s_{n}-1 \implies s_{1}=1$. However, this is a contradiction since $S$ is proper.
\end{proof}
\vspace{1em}

\section{Young Diagrams of  Symmetric and Pseudo-symmetric Numerical Semigruops}\label{sec4}

In this section, we investigate the behaviour of  end-to-end and overlap sums on symmetric numerical semigroups, and the behaviour of discrete and conjoint sums on pseudo-symmetric semigroups.

Recall that a numerical semigroup $S$ is symmetric  if $F(S)$ is odd and $x\in \Z\setminus S \implies F(S)-x\in S$. Also a numerical semigorup $S$ is pseudo-symmetric if $F(S)$ is even and $x\in\Z\setminus S \implies x=\frac{F(S)}{2}$ or $F(S)-x\in S$.

\begin{proposition}\cite[Corollary 4.5]{RG} \label{pps1} Let $S$ be a numerical semigroup.
	\begin{enumerate}
		\item $S$ is symmetric if and only if $g(S)=\dfrac{F(S)+1}{2}$,
		\item $S$ is pseudo-symmetric if and only if $g(S)=\dfrac{F(S)+2}{2}$.
	\end{enumerate}
\end{proposition}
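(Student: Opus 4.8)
The plan is to deduce both equivalences from a single ``folding'' argument on the integers between $0$ and $F(S)$. Write $F=F(S)$ and let $n$ be the number of small elements of $S$; since the small elements are exactly the elements of $S$ in $\{0,1,\dots,F\}$ and every gap lies in $\{1,\dots,F\}$, the $F+1$ integers $0,1,\dots,F$ split into the $n$ small elements and the $g(S)$ gaps, giving the identity $n+g(S)=F+1$. The one genuinely semigroup-theoretic input I would isolate first is the elementary fact that if $x,y\in\N_{0}$ with $x+y=F$, then $x$ and $y$ cannot both lie in $S$, for otherwise $F=x+y\in S$, contradicting $F\notin S$. Equivalently, the involution $\iota\colon x\mapsto F-x$ of $\{0,1,\dots,F\}$ carries $S\cap\{0,\dots,F\}$ injectively into the set of gaps $G(S)$, so $n\le g(S)$ for every numerical semigroup.

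For part (1), I would observe that $S$ is symmetric exactly when, in addition to $F$ being odd, $\iota$ also carries $G(S)$ into $S$: for $x\in\Z\setminus S$ with $x<0$ one has $F-x\ge F+1=C(S)$, hence $F-x\in S$ automatically, and no gap exceeds $F$, so the clause ``$x\notin S\implies F-x\in S$'' is precisely the statement $\iota(G(S))\subseteq S$. Combining this with the injection of the previous paragraph, two injections in opposite directions between finite sets force both to be bijections, so $S$ is symmetric if and only if $\iota$ restricts to a bijection between $G(S)$ and $S\cap\{0,\dots,F\}$, i.e. if and only if $g(S)=n$; together with $n+g(S)=F+1$ this is exactly $g(S)=\tfrac{F+1}{2}$. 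Note that $g(S)=\tfrac{F+1}{2}$ already forces $F$ odd because $g(S)\in\N$, so the parity clause needs no separate verification in the converse direction.

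For part (2), I would run the same argument with $F$ even, the only new feature being the fixed point $F/2$ of $\iota$. Since $F/2+F/2=F\notin S$, the point $F/2$ is always a gap, and the injection $\iota\colon S\cap\{0,\dots,F\}\hookrightarrow G(S)$ in fact lands in $G(S)\setminus\{F/2\}$ (if $F-x=F/2$ then $x=F/2\notin S$). The pseudo-symmetry clause ``$x\notin S\implies x=F/2$ or $F-x\in S$'' says exactly that $\iota(G(S)\setminus\{F/2\})\subseteq S$, the out-of-range cases being automatic as before; so $S$ is pseudo-symmetric if and only if $\iota$ restricts to a bijection between $G(S)\setminus\{F/2\}$ and $S\cap\{0,\dots,F\}$, i.e. $n=g(S)-1$, which with $n+g(S)=F+1$ gives $g(S)=\tfrac{F+2}{2}$, and again this equation forces $F$ even.

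There is no deep obstacle here; the step I would be most careful about is the bookkeeping for integers outside $\{0,\dots,F\}$, so that ``$S$ symmetric $\Leftrightarrow$ $\iota(G(S))\subseteq S$'' (and its pseudo-symmetric analogue) is a genuine equivalence, together with the clean use of integrality of $g(S)$ to recover the parity of $F$ for free in the converse directions. As a thematic cross-check, one can instead show that $S$ is symmetric precisely when the Young diagram $Y_{S}$ is self-conjugate under reflection across its main diagonal, and pseudo-symmetric precisely when $Y_{S}$ becomes self-conjugate after deleting a single diagonal box; reading $\iota$ as this reflection on the boundary path of $Y_{S}$ and comparing the number of rows with the number of columns reproduces the same two formulas.
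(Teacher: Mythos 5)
Your argument is correct, and it is the standard proof of this result. Note that the paper does not prove this proposition at all --- it is quoted verbatim from \cite[Corollary 4.5]{RG} --- so there is no in-paper argument to compare against; your counting identity $n+g(S)=F(S)+1$ combined with the involution $x\mapsto F(S)-x$ (injective from $S\cap\{0,\dots,F(S)\}$ into $G(S)$, surjective exactly under (pseudo-)symmetry, with the fixed point $F(S)/2$ handled separately in the even case) is precisely the argument given in that reference, and your observation that integrality of $g(S)$ recovers the parity of $F(S)$ in the converse directions is a clean touch.
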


\begin{definition}
	For a numerical semigroup $S$ with $G(S)=\{ a_{1},\dots,a_{k} \}$ and the Frobenius number $F(S)=a_{k}$, we define the dual of $S$ as $$S^{*}=\{0,F(S)-a_{k-1},F(S)-a_{k-2},\dots,F(S)-a_{1},s_{n},\rightarrow \}$$ whose Frobenius number is again $F(S)$.
\end{definition}

Notice that the corresponding Young diagram $Y_{S^{*}}$ of $S^{*}$ is a Young diagram that we get from interchanging the rows and columns of the corresponding Young diagram $Y_{S}$ of $S$. For more information, we refer to \cite{HBN} and \cite{KN}.

\begin{lemma} \label{l3}
	Let $S=\{ 0,s_{1},\dots,s_{n} \}$ be a numerical semigroup. Then
	\begin{enumerate}
		\item $g(S)=s_{n}-n$ and $g(S^{*})=n$,
		\item $g(S\boxplus_{E}S^{*})=s_{n}$ and $g(S\boxplus_{O}S^{*})=s_{n}-1$,
		\item $g(S\boxplus_{D}S^{*})=s_{n}+1$ and $g(S\boxplus_{C}S^{*})=s_{n}$.
	\end{enumerate}
\end{lemma}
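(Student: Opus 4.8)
The plan is to establish part~(1) by a direct count of the gaps lying below the conductor, and then to deduce parts~(2) and~(3) almost immediately by substituting part~(1) into Lemma~\ref{l2}.

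For the first identity of~(1), recall that in the notation $S=\{0,s_{1},\dots,s_{n}\}$ the element $s_{n}=C(S)$ is the conductor and $0=s_{0}<s_{1}<\dots<s_{n-1}$ are exactly the elements of $S$ below $C(S)$, while every integer $\ge s_{n}$ belongs to $S$. Hence $G(S)\subseteq\{0,1,\dots,s_{n}-1\}$, a set of $s_{n}$ integers of which precisely the $n$ small elements lie in $S$, and so $g(S)=s_{n}-n$. For $g(S^{*})$ I would invoke the remark preceding the lemma that $Y_{S^{*}}$ is obtained from $Y_{S}$ by interchanging rows and columns: since $Y_{S}$ has $n$ columns and $g(S)$ rows, $Y_{S^{*}}$ has $g(S)$ columns and $n$ rows, and the number of rows of $Y_{S^{*}}$ is $g(S^{*})$, whence $g(S^{*})=n$. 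Equivalently, one reads off from the definition of $S^{*}$ that its small elements are the $k=g(S)$ distinct integers $0,\,F(S)-a_{k-1},\dots,F(S)-a_{1}$, all lying below $C(S^{*})=F(S)+1=s_{n}$, and the same count gives $g(S^{*})=s_{n}-g(S)=n$.

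For~(2) and~(3), write $G(S)=\{a_{1}<\dots<a_{k}\}$ and $G(S^{*})=\{b_{1}<\dots<b_{l}\}$, so that by~(1) one has $k=g(S)=s_{n}-n$ and $l=g(S^{*})=n$. Since the genus of a numerical set is the number of its gaps, and Lemma~\ref{l2} exhibits $G(S\boxplus_{E}S^{*})$, $G(S\boxplus_{O}S^{*})$, $G(S\boxplus_{D}S^{*})$, $G(S\boxplus_{C}S^{*})$ as explicit finite lists, it remains only to check that each list has no repeated entry and then to count. In every case the shifted entries built from the $b_{j}$ strictly exceed $a_{k}$ (because $b_{j}\ge1$, and even the extremal entry $a_{k}+b_{1}-1$ occurring in the overlap sum satisfies $a_{k}+b_{1}-1\ge a_{k}$), hence are disjoint from the initial block $a_{1},\dots,a_{k}$ (respectively $a_{1},\dots,a_{k-1}$ for the overlap sum, where $a_{k}$ has been removed). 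Counting entries then yields $g(S\boxplus_{E}S^{*})=k+l=s_{n}$, $g(S\boxplus_{O}S^{*})=(k-1)+l=s_{n}-1$, $g(S\boxplus_{D}S^{*})=(k+1)+l=s_{n}+1$, and $g(S\boxplus_{C}S^{*})=k+l=s_{n}$.

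I do not expect a genuine obstacle here: once part~(1) is in place, parts~(2) and~(3) are pure bookkeeping on top of Lemma~\ref{l2}. The points demanding care are the indexing convention---that $n$ is simultaneously the number of small elements of $S$, the number of columns of $Y_{S}$, and the genus of $S^{*}$---and the verification that the gap lists of Lemma~\ref{l2} contain no coincidences; the most delicate instance is the overlap sum, where the original gap $a_{k}$ is deleted and one must check that the smallest new gap $a_{k}+b_{1}-1$ does not collide with $a_{k-1}$, which is precisely where the inequality $b_{1}\ge1$ is used.
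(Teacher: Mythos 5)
Your proof is correct and follows essentially the same route as the paper: part (1) by a direct count of gaps below the conductor, and parts (2) and (3) by reading off the cardinalities of the gap sets listed in Lemma~\ref{l2}, which is exactly the paper's appeal to $g(S\boxplus_{E}S^{*})=g(S)+g(S^{*})$ and its variants. The only difference is that you spell out the disjointness check for the shifted gaps and the count $g(S^{*})=s_{n}-g(S)$, which the paper leaves as ``straightforward by definitions.''
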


\begin{proof}
	The first is straightforward by definitions. For the others, we have $g(S\boxplus_{E}S^{*})=g(S)+g(S^{*})$ and $g(S\boxplus_{O}S^{*})=g(S)+g(S^{*})-1$,  $g(S\boxplus_{D}S^{*})=g(S)+g(S^{*})+1$ and $g(S\boxplus_{C}S^{*})=g(S)+g(S^{*})$ by Lemma \ref{l2}, and so we are done.
\end{proof}

\begin{remark} \label{s2}
	If $S=\{0,s_{1},\dots,s_{n},\rightarrow\}$ is a symmetric numerical semigroup, then by definition, the of gaps of $S$ is $$G(S)=\{ a_{1},\dots,a_{n-1}, F(S) \}$$ such that $S=\{ 0, F(S)-a_{n-1},\dots,F(S)-a_{1},s_{n},\rightarrow \}$ where $s_{i}=F(S)-a_{n-i}$ for $i=1,2,\dots,n-1$. Thus, $S=S^{*}$.
\end{remark}

Next theorem gives a decomposition of a symmetric numerical semigroup into an over semigroup and its dual.

\begin{theorem} \label{tps2}
		For every symmetric numerical semigroup $S$, there exist a unique numerical semigorup $T$ such that $S=T\boxplus_{E}T^{*}$ or $S=T\boxplus_{O}T^{*}$.
\end{theorem}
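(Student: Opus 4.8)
The plan is to take a symmetric numerical semigroup $S$ and \emph{extract} a smaller numerical semigroup $T$ out of it by reversing the end-to-end or overlap sum construction, then verify that $T$ is indeed a numerical semigroup and that $S=T\boxplus_{E}T^{*}$ or $S=T\boxplus_{O}T^{*}$ respectively. By Remark \ref{s2} we already know $S=S^{*}$, so the decomposition we seek is forced to be of the shape $T\boxplus_{E}T^{*}$ or $T\boxplus_{O}T^{*}$; the real content is to pin down $T$ and show it exists and is a semigroup.

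First I would analyze the structure coming from Lemma \ref{l2}(2),(4). Writing $G(S)=\{a_{1}<\dots<a_{k}\}$ with $a_{k}=F(S)$, and recalling that the symmetry of $S$ means $a_{i}+a_{k+1-i}=F(S)$ is \emph{not} quite right — rather $x\notin S\iff F(S)-x\in S$, so the gaps pair off with small elements. The idea: look at the ``middle'' of the gap set. If $S=T\boxplus_{E}T^{*}$ with $G(T)=\{b_{1},\dots,b_{l}\}$, $F(T)=b_{l}$, and $G(T^{*})=\{c_{1},\dots,c_{p}\}$, $F(T^{*})=b_{l}$ (same Frobenius number, by the definition of dual), then $G(S)=\{b_{1},\dots,b_{l},b_{l}+c_{1}+1,\dots,b_{l}+c_{p}+1\}$. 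So the first $l$ gaps of $S$ are exactly $G(T)$, which already determines $T$ as a numerical \emph{set}: $T$ should be the numerical set whose gap set is an initial segment $\{a_{1},\dots,a_{l}\}$ of $G(S)$. The correct cut point $l$ is determined by the requirement that the remaining gaps $a_{l+1},\dots,a_{k}$ all exceed $a_{l}$ by $a_{l}+$ (something $\ge 1$); concretely one wants $a_{l+1}=a_{l}+c_{1}+1$ with $c_{1}\ge 1$, i.e. $a_{l+1}\ge 2a_{l}+2$ (end-to-end case) or $a_{l+1}\ge 2a_{l}+1$ (overlap case, where the shift is by $a_{k}+b_{1}-1$). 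So the plan is: let $a_{l}$ be the largest gap of $S$ with $2a_{l}<a_{l+1}$ in a suitable sense — equivalently, find the last ``step'' in the polygonal path of $S$ before the conductor behaves like a fresh start. Then set $T$ to be the numerical set with $G(T)=\{a_{1},\dots,a_{l}\}$, and define $T^{*}$ via its gaps $\{a_{l+1}-a_{l}-1,\dots,a_{k}-a_{l}-1\}$ (resp. $-a_{l}+1$). One then checks that this $T^{*}$ really is the dual of $T$ — this is where the symmetry of $S$ gets used essentially: the pairing $x\leftrightarrow F(S)-x$ on $G(S)\cup S_{<C}$ restricts to show the top half of $G(S)$ is the ``reversed'' bottom half, which is precisely the row/column transpose that defines the dual.

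The remaining verification is that $T$ is a numerical semigroup, i.e. $G_{i}(T)\subseteq G_{0}(T)$ for all columns $i$, by Proposition \ref{prp}(2). Here I would argue that the columns of $Y_{T}$ are literally the lower-left columns of $Y_{S}$ truncated to their bottom $l$ rows, and the hook-length containments for $Y_{S}$ (which hold since $S$ is a semigroup) descend to $Y_{T}$. Then, knowing $T$ is a semigroup, $T^{*}$ is automatically a semigroup (the dual of a semigroup with $F\in$ its complement is a semigroup — or at least this is standard from the transpose description and can be cited from \cite{HBN,KN}, or reproved directly from Proposition \ref{prp}(2) since transposing $Y_{T}$ turns the column condition into itself). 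Finally, reassembling: by Lemma \ref{l2} the gap set of $T\boxplus_{E}T^{*}$ (resp. $T\boxplus_{O}T^{*}$) is exactly $G(S)$, hence $T\boxplus_{E}T^{*}=S$ (resp. overlap), giving existence. For uniqueness, note the cut point $l$ is forced: the first block of gaps of any valid $T$ must be an initial segment of $G(S)$, and the constraint $c_{1}\ge1$ (a numerical set has at least one gap, namely $1$, unless it is $\N_0$ — but $T$ is proper) together with $F(T)=F(T^{*})$ pins down exactly one admissible $l$ in each of the two cases, and exactly one of the two cases occurs depending on the parity/arithmetic of the middle gaps.

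\textbf{Main obstacle.} The crux is showing that the naturally-defined second factor $T^{*}$ (built from the top half of $G(S)$) is genuinely the \emph{dual} of the first factor $T$ — not merely some numerical semigroup with the right Frobenius number. This is exactly the point where symmetry of $S$ must be invoked, via Proposition \ref{pps1}(1) ($g(S)=\tfrac{F(S)+1}{2}$) forcing the gap set to be self-complementary under $x\mapsto F(S)-x$, so that the transpose of $Y_{T}$ reappears inside $Y_{S}$ as the upper-right block. Getting the bookkeeping of this transpose/reflection correct — especially reconciling the off-by-one shifts ($+1$ for end-to-end versus $-1$ for overlap) with the $+1$ difference between $g(S)=\tfrac{F(S)+1}{2}$ and the generic count — is the delicate part; I expect the two cases (end-to-end vs. overlap) to correspond precisely to whether a certain middle gap equals $a_{l}+1$ or is strictly larger, and nailing down that dichotomy cleanly is where most of the work will go.
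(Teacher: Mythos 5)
Your strategy is the same as the paper's: cut $S$ at its midpoint, take the lower half as $T$, and use the symmetry $S=S^{*}$ (Remark \ref{s2}) to recognise the upper half of $G(S)$ as the shifted gap set of $T^{*}$, with the end-to-end/overlap dichotomy governed by which side of the midpoint lies in $S$. The paper does exactly this, taking $T=\{0,s_{1},\dots,\frac{C(S)}{2},\rightarrow\}$ when $\frac{C(S)}{2}\in S$ (end-to-end case) and $T=\{0,s_{1},\dots,\frac{C(S)}{2}-1,\frac{C(S)}{2}+1,\rightarrow\}$ when $\frac{C(S)}{2}\notin S$ (overlap case), and then verifying $T\boxplus_{E}T^{*}=S$, resp.\ $T\boxplus_{O}T^{*}=S$, by direct computation; your gap-set formulation via Lemma \ref{l2} is an equivalent repackaging, and your uniqueness sketch actually supplies a point the paper leaves implicit.

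However, the criterion you first state for locating the cut would fail if followed literally. From $a_{l+1}=a_{l}+c_{1}+1$ with $c_{1}\ge 1$ you may only conclude $a_{l+1}\ge a_{l}+2$, not $a_{l+1}\ge 2a_{l}+2$; and ``the largest gap $a_{l}$ followed by a jump of at least $2$'' selects the wrong index in general: for the paper's example with $G(S)=\{1,2,3,4,6,7,9,11,12,14,17,19,22,27\}$ that rule picks $a_{l}=22$, whereas the correct cut is at $14$. What actually pins down $l$ is the Frobenius relation from Lemma \ref{l2}: $F(S)=2F(T)+1$ for an end-to-end decomposition and $F(S)=2F(T)-1$ for an overlap one, so $F(T)$ must equal $\frac{C(S)}{2}-1$ or $\frac{C(S)}{2}$, and symmetry guarantees exactly one of these two integers is a gap of $S$ (they sum to $F(S)$). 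You do invoke this constraint in your uniqueness paragraph, so the repair is only to promote it to the definition of the cut, after which your argument coincides with the paper's. A second, harmless, error: your aside that $T^{*}$ is ``automatically a semigroup'' is false --- e.g.\ $T=\{0,4,7,\rightarrow\}$ has $T^{*}=\{0,1,3,4,5,7,\rightarrow\}$, which is not closed under addition --- but the theorem only asserts that $T$ is a semigroup and the identity $S=T\boxplus_{E}T^{*}$ is a statement about numerical sets, so nothing in the proof depends on that claim.
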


\begin{proof}
By Remark \ref{s2}, $S=\{ 0,s_{1},\dots,s_{k},s_{k+1},\dots,s_{n},\rightarrow \}$ with $s_{n}=C(S), s_{k}\leq\frac{C(S)}{2}<s_{k+1}$, $G(S)=\{ a_{1},\dots,a_{n} \}$, $F(S)=a_{n}$ and $$S=S^{*}=\{ 0,F(S)-a_{n-1},\dots,F(S)-a_{1},s_{n}=C(S),\rightarrow \}.$$ We continue in two cases. First, if $\frac{C(S)}{2}\in S$, then we choose $T=\{ 0,s_{1},\dots,s_{k}=\frac{C(S)}{2},\rightarrow \}$ which is obviously a numerical semigroup with $G(T)=\{ a_{1},\dots,a_{n-k} \}$ and $F(T)=\frac{C(S)}{2}-1$. Then $$T^{*}=\{0, F(T)-a_{n-k-1},\dots,F(T)-a_{1},\frac{C(S)}{2},\rightarrow \}.$$ Therefore, by definition of end-to-end sum
\begin{align*}
	T\boxplus_{E}T^{*}=&\{ 0,s_{1},\dots,s_{k-1},\frac{C(S)}{2},\frac{C(S)}{2}+F(T)-a_{n-k-1},\dots\\
	&\hspace{4cm}\dots,\frac{C(S)}{2}+F(T)-a_{1},C(S), \rightarrow \}\\
	=&\{ 0,s_{1},\dots,s_{k},C(S)-1-a_{n-k-1},\dots,C(S)-1-a_{1},C(S), \rightarrow \}\\
	=&\{ 0,s_{1},\dots,s_{k},F(S)-a_{n-k-1},\dots,F(S)-a_{1},C(S), \rightarrow \}\\
	=&\{ 0,s_{1},\dots,s_{k},s_{k+1},\dots,s_{n-1},s_{n}, \rightarrow \}=S
\end{align*}
Now if $\frac{C(S)}{2}\notin S$, then $F(S)-\frac{C(S)}{2}\in S \implies \frac{C(S)}{2}-1\in S\implies s_{k}=\frac{C(S)}{2}-1$. Thus, we choose $T=\{ 0,s_{1},\dots,s_{k},\frac{C(S)}{2}+1,\rightarrow \}$ which is obviously a numerical semigroup with $G(T)=\{  a_{1},\dots,a_{n-k-1},\frac{C(S)}{2} \}$ and $F(T)=\frac{C(S)}{2}$. Then $$T^{*}=\{0, F(T)-a_{n-k-1},\dots,F(T)-a_{1},\frac{C(S)}{2}+1,\rightarrow \}.$$ Therefore, by definition of overlap sum
\begin{align*}
	T\boxplus_{O}T^{*}=&\{ 0,s_{1},\dots,s_{k},\frac{C(S)}{2}-1+F(T)-a_{n-k-1},\dots\\
	&\hspace{4cm}\dots,\frac{C(S)}{2}-1+F(T)-a_{1},C(S), \rightarrow \}\\
	=&\{ 0,s_{1},\dots,s_{k},C(S)-1-a_{n-k-1},\dots,C(S)-1-a_{1},C(S), \rightarrow \}\\
	=&\{ 0,s_{1},\dots,s_{k},F(S)-a_{n-k-1},\dots,F(S)-a_{1},C(S), \rightarrow \}\\
	=&\{ 0,s_{1},\dots,s_{k},s_{k+1},\dots,s_{n-1},s_{n}, \rightarrow \}=S
\end{align*}
\end{proof}

\begin{exam}
	Let $S=\{ 0,4,7,8,10,11,12,14,\rightarrow \}$ with $F(S)=14$ and $G(S)=\{ 1,2,3,5,6,9,13 \}$, which is a  pseudo-symmetric numerical semigroup containing $\frac{C(S)}{2}=7$. Therefore, by Theorem \ref{tps2}, $T=\{ 0,4,7,\rightarrow \}$ with $G(T)=\{ 1,2,3,5,6 \}$ and $T^{*}=\{ 0,1,3,4,5,7,\rightarrow \}$ where the end-to-end sum of $T$ and $T^{*}$ is $S$, i.e. $T\boxplus_{E}T^{*}=S$. The Young diagram of $Y_{S}=Y_{T}\boxplus_{E}Y_{T^{*}}$ is shown as below.
	\vspace{1em}
	\begin{center}
		\tiny{\begin{tabular}{ccccc}
				\ydiagram[*(yellow) ]{2,2,1,1,1}&$\boxplus_{E}$&\ydiagram[*(red) ]{5,2}&$=$&\ydiagram[*(red) ]{2+5,2+2}*[*(yellow) ]{,0,0,2,2,1,1,1}*[*(white) ]{7,4,2,2,1,1,1}\\
				$\underbrace{\hspace{3.2em}}_{\displaystyle Y_{T}}$&&$\underbrace{\hspace{8.0em}}_{\displaystyle Y_{T^{*}}}$&&$\underbrace{\hspace{11.2em}}_{\displaystyle Y_{S}=Y_{T}\boxplus_{E}Y_{T^{*}}}$
		\end{tabular}}
	\end{center}
\vspace{1em}
\end{exam}

\begin{exam}
Let $$S=\{ 0,5,8,10,13,15,16,18,20,21,23,24,25,26,28,\rightarrow \}$$ with $G(S)=\{ 1,2,3,4,6,7,9,11,12,14,17,19,22,27 \}$ and $F(S)=27$, which is a symmetric numerical semigroup not containing $\frac{C(S)}{2}=14$. Therefore, by Theorem \ref{tps2}, $T=\{ 0,5,8,10,13,15,\rightarrow \}$ with $G(T)=\{ 1,2,3,4,6,7,9,11,12,14 \}$ and $$T^{*}=\{ 0,2,3,5,7,8,10,11,12,13,15,\rightarrow \}$$ where the overlap sum of $T$ and $T^{*}$ is $S$, i.e. $T\boxplus_{O}T^{*}=S$. The Young diagram of $Y_{S}=Y_{T}\boxplus_{O}Y_{T^{*}}$ is shown as below.
\vspace{1em}
\begin{center}
	\tiny{\begin{tabular}{ccccc}
			\ydiagram[*(yellow) ]{5,4,4,3,2,2,1,1,1,1}&$\boxplus_{O}$&\ydiagram[*(red) ]{10,6,4,3,1}\\
			$\underbrace{\hspace{8em}}_{\displaystyle Y_{T}}$&&$\underbrace{\hspace{16em}}_{\displaystyle Y_{T^{*}}}$
	\end{tabular}

\begin{tabular}{ccccc}
	$=$\ydiagram[*(red) ]{4+10,4+6,4+4,4+3}*[*(yellow) ]{0,0,0,0,4,4,4,3,2,2,1,1,1,1}*[*(orange) ]{0,0,0,0,4+1}*[*(white) ]{14,10,8,7,5,4,4,3,2,2,1,1,1,1}\\
	$\underbrace{\hspace{20.8em}}_{\displaystyle Y_{S}=Y_{T}\boxplus_{0}Y_{T^{*}}}$
\end{tabular}}
\end{center}
\vspace{1em}
\end{exam}

\begin{lemma} \label{ls2}
	If $S=\{0,s_{1},\dots,s_{n},\rightarrow\}$ is a symmetric numerical semigroup not equal to $\{ 0,2,\rightarrow \}$, then $s_{n}$ is always not a minimal generator for $S$. 
\end{lemma}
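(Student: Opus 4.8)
The plan is to prove the statement directly rather than by contradiction: I will exhibit $s_{n}=C(S)$ as a sum of two nonzero elements of $S$, which is precisely what it means for $s_{n}$ not to be a minimal generator. Write $m=s_{1}$ for the multiplicity of $S$. Since $S$ is proper, $m\geq 2$. I first record that, under the hypotheses, $s_{1}<s_{n}=C(S)$: indeed, if $s_{1}=C(S)$ then $n=1$ and $S=\{0,s_{1},\rightarrow\}$, for which $F(S)=s_{1}-1$ and $g(S)=s_{1}-1$, so by Proposition \ref{pps1} symmetry would force $s_{1}-1=\tfrac{s_{1}}{2}$, i.e. $s_{1}=2$, contradicting $S\neq\{0,2,\rightarrow\}$. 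Hence $C(S)-s_{1}>0$.

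The key step is the decomposition $C(S)=s_{1}+\bigl(C(S)-s_{1}\bigr)$. Here $s_{1}$ is a nonzero element of $S$, and $C(S)-s_{1}>0$ by the previous paragraph, so it only remains to verify $C(S)-s_{1}\in S$. Rewrite $C(S)-s_{1}=F(S)+1-s_{1}=F(S)-(s_{1}-1)$. Now $s_{1}-1$ is a positive integer (since $m\geq 2$) that is strictly smaller than the multiplicity $s_{1}$, hence $s_{1}-1\notin S$. Since $S$ is symmetric, $x\notin S$ implies $F(S)-x\in S$; applying this with $x=s_{1}-1$ gives $F(S)-(s_{1}-1)=C(S)-s_{1}\in S$. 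Thus $C(S)=s_{1}+(C(S)-s_{1})$ expresses $s_{n}$ as a sum of two nonzero elements of $S$, so $s_{n}$ is not a minimal generator.

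As for where the difficulty lies, there is essentially no deep obstacle: once the right decomposition is singled out, the verification is a single invocation of the symmetry axiom together with the trivial fact that no positive integer below the multiplicity belongs to $S$. The only point that genuinely needs care is the degenerate case — one must guarantee that $C(S)-s_{1}$ is actually positive (equivalently $s_{1}<C(S)$), and this is exactly where the hypothesis $S\neq\{0,2,\rightarrow\}$ is used, since for that semigroup $s_{1}=C(S)=2$ really is a minimal generator, so the exclusion is necessary. I would therefore dispose of the degenerate case first (via Proposition \ref{pps1}, or by direct inspection of the semigroups $\{0,c,\rightarrow\}$) and then present the decomposition.
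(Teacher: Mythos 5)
Your proof is correct and takes essentially the same route as the paper's: the paper writes $s_{n}=(F(S)-a_{i})+(a_{i}+1)$ for some gap $a_{i}$ with $a_{i}+1\in S$, and your decomposition $s_{n}=s_{1}+\bigl(F(S)-(s_{1}-1)\bigr)$ is exactly this with $a_{i}=s_{1}-1$. If anything, yours is the more complete version, since you justify the existence of such a gap and pinpoint where the hypothesis $S\neq\{0,2,\rightarrow\}$ is actually needed, both of which the paper leaves implicit.
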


\begin{proof}
	Assume that $S$ is a symmetric numerical semigroup and $S\neq\{ 0,2,\rightarrow \}$. Then for each small element $s_{i}$, we have $s_{i}=F(S)-a_{i}$ such that $a_{i}\in G(S)$ and $i=1,\dots,n-1$. Moreover, at least one of $a_{i}$, we should have $a_{i}+1 \in S$. Hence, for that specific $a_{i}$, $s_{i}=F(S)-a_{i} \implies s_{i}=s_{n}-1-a_{i} \implies s_{i}+a_{i}+1=s_{n}$.
\end{proof}

\begin{corollary} \label{cps1}
If $S=\{0,s_{1},\dots,s_{n},\rightarrow\}$ is a symmetric numerical semigroup not equal to $\{ 0,2,\rightarrow \}$, then $S\boxplus_{C}S^{*}$ and $S\boxplus_{D}S^{*}$ are only numerical sets.
\end{corollary}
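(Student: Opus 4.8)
The plan is to reduce this statement to Proposition \ref{p1} via the two facts that immediately precede it. Since $S$ is symmetric, Remark \ref{s2} tells us that $S=S^{*}$, so the objects under consideration are literally $S\boxplus_{C}S$ and $S\boxplus_{D}S$; there is nothing further to say about the dual. Hence the whole content of the corollary is the assertion that these two self-sums fail to be numerical semigroups.

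Next I would invoke Lemma \ref{ls2}: because $S$ is symmetric and $S\neq\{0,2,\rightarrow\}$, the largest small element $s_{n}=C(S)$ is not a minimal generator of $S$. This is exactly the hypothesis required by Proposition \ref{p1}. Applying that proposition (with $T=S$, legitimate since $S=S^{*}$) yields that $S\boxplus_{C}S$ and $S\boxplus_{D}S$ are only numerical sets, which is the claim. So the proof is a two-line chain: Remark \ref{s2} $\Rightarrow$ $S=S^{*}$; Lemma \ref{ls2} $\Rightarrow$ $s_{n}$ not a minimal generator; Proposition \ref{p1} $\Rightarrow$ conclusion.

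I do not expect any real obstacle here, since every ingredient has already been established; the only thing to be careful about is the bookkeeping that identifying $S$ with $S^{*}$ is exactly what lets Proposition \ref{p1} apply, and that the excluded case $\{0,2,\rightarrow\}$ in Lemma \ref{ls2} is precisely the case excluded in the hypothesis of the corollary, so the two exclusions match up with nothing left over. If one wanted to be completely self-contained one could instead re-run the short argument of Proposition \ref{p1} directly: $s_{n}=a_{k}+1\in G(S\boxplus_{D}S)$ handles the discrete sum, and for the conjoint sum, writing $s_{n}=s_{i}+s_{j}$ from Lemma \ref{ls2} and assuming $S\boxplus_{C}S$ were a semigroup forces $s_{1}=1$, contradicting properness. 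Either route gives a complete proof in a few lines.
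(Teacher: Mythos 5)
Your proof is correct and follows exactly the same route as the paper: reduce to $S=S^{*}$ via Remark \ref{s2}, apply Lemma \ref{ls2} to see $s_{n}$ is not a minimal generator, and conclude by Proposition \ref{p1}. The paper's own proof is just a terser version of this same two-line chain.
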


\begin{proof}
For a symmetric $S$, we have $S=S^{*}$. Then by Proposition \ref{p1} and Lemma \ref{ls2}, we are done.
\end{proof}

\begin{remark} \label{rps1}
If $S=\{0,s_{1},\dots,s_{n},\rightarrow\}$ is a pseudo-symmetric numerical semigroup, then by definition, the set of gaps of $S$ is $$G(S)=\{ a_{1},\dots,a_{n-k-1},\frac{F(S)}{2},a_{n-k},\dots,a_{n-1},F(S) \}$$ such that $$a_{1}<\dots<a_{n-k-1}<\frac{F(S)}{2}<a_{n-k}<\dots<a_{n-1}< F(S)$$ and $S=\{ 0, F(S)-a_{n-1},\dots,F(S)-a_{1},s_{n},\rightarrow \}$ where $s_{i}=F(S)-a_{n-i}$ for $i=1,2,\dots,n-1$.
\end{remark}

Next theorem gives a decomposition of a pseudo-symmetric numerical semigroup into an over semigroup and its dual.

\begin{theorem} \label{tps1}
	For every pseudo-symmetric numerical semigroup $S$ not equal to $\{ 0,3,\rightarrow \}$, there exist a unique non-symmetric numerical semigorup $T$ such that $S=T\boxplus_{C}T^{*}$ or $S=T\boxplus_{D}T^{*}$.
\end{theorem}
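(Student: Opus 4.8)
The plan is to follow the template of the proof of Theorem \ref{tps2}, now splitting according to the position of $\frac{F(S)}{2}$ relative to $S$. First I would record the basic dichotomy. Since $F(S)$ is even we have $\frac{F(S)}{2}\in G(S)$ by Remark \ref{rps1}; applying the pseudo-symmetric condition to $x=\frac{F(S)}{2}-1$ shows that if $\frac{F(S)}{2}-1\notin S$ then $\frac{F(S)}{2}+1=F(S)-(\frac{F(S)}{2}-1)\in S$, while if $\frac{F(S)}{2}-1\in S$ then $\frac{F(S)}{2}+1\notin S$ (otherwise the semigroup $S$ would contain $(\frac{F(S)}{2}-1)+(\frac{F(S)}{2}+1)=F(S)$). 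Thus exactly one of $\frac{F(S)}{2}-1,\frac{F(S)}{2}+1$ lies in $S$, the only exception being $S=\{0,3,\rightarrow\}$, which is excluded. Using Remark \ref{rps1} I then write $S=\{0,s_1,\dots,s_k,s_{k+1},\dots,s_n,\rightarrow\}$ with $s_n=C(S)$, $s_k<\frac{F(S)}{2}<s_{k+1}$, $G(S)=\{a_1,\dots,a_{n-k-1},\frac{F(S)}{2},a_{n-k},\dots,a_{n-1},F(S)\}$ and $s_i=F(S)-a_{n-i}$.

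In the case $\frac{F(S)}{2}+1\in S$ (so $s_{k+1}=\frac{F(S)}{2}+1$ and $\frac{F(S)}{2}-1\notin S$) I would take $T=\{0,s_1,\dots,s_k,\frac{F(S)}{2},\rightarrow\}$, the truncation of $S$ with conductor lowered to $\frac{F(S)}{2}$, so $F(T)=\frac{F(S)}{2}-1$; in the case $\frac{F(S)}{2}+1\notin S$ (so $s_k=\frac{F(S)}{2}-1$) I would take $T=\{0,s_1,\dots,s_k,\frac{F(S)}{2}+1,\rightarrow\}$, so $F(T)=\frac{F(S)}{2}$. In both cases $T$ is a numerical semigroup: the only sums to test are $s_i+s_j$ with $i,j\le k$, and such a sum lies in $S$, is $<F(S)$, and is $\ne\frac{F(S)}{2}$ (a gap), hence is either $\ge C(T)$ or is a small element of $S$ below $\frac{F(S)}{2}$, i.e.\ one of $0,s_1,\dots,s_k$; either way it is in $T$. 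Next, writing out $T^*$ from its definition and applying the formulas of Lemma \ref{l2} for $\boxplus_D$ (resp.\ $\boxplus_C$), the small elements of $T\boxplus_D T^*$ (resp.\ $T\boxplus_C T^*$) come out as $0,s_1,\dots,s_k$, then $\frac{F(S)}{2}+1$, then the numbers $F(S)-a_j$ ranging over the gaps $a_j$ of $S$ below $\frac{F(S)}{2}$, with conductor $F(S)+1=C(S)$; by the relation $s_i=F(S)-a_{n-i}$ of Remark \ref{rps1} this is exactly the list of small elements of $S$, so $T\boxplus_D T^*=S$ (resp.\ $T\boxplus_C T^*=S$).

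For non-symmetry I would argue by contradiction: if $T$ were symmetric and $T\ne\{0,2,\rightarrow\}$, then by Lemma \ref{ls2} its conductor $C(T)$ is not a minimal generator, say $C(T)=s_i+s_j$ with $s_i,s_j$ small elements of $T$; but $C(T)=F(T)+1\in G(T\boxplus_D T^*)=G(S)$ by Lemma \ref{l2}, whereas $s_i,s_j$ are also small elements of $S$ and $s_i+s_j\in S$ because $S$ is a semigroup, a contradiction. The borderline possibility $T=\{0,2,\rightarrow\}$ forces $S=\{0,2,\rightarrow\}\boxplus\{0,2,\rightarrow\}$, which is a semigroup only for $\boxplus_C$ and then equals $\{0,3,\rightarrow\}$, excluded. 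For uniqueness I would note that any numerical semigroup $T'$ with $S=T'\boxplus_D (T')^*$ or $S=T'\boxplus_C (T')^*$ must have $F(T')\in\{\tfrac{F(S)}{2}-1,\tfrac{F(S)}{2}\}$ (hence $C(T')\in\{\tfrac{F(S)}{2},\tfrac{F(S)}{2}+1\}$) by Lemma \ref{l2}, and its small elements below $C(T')$ must agree with those of $S$; this pins $T'$ down to one of the two truncations above, and the non-symmetry requirement selects exactly one of them.

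The hard part will be the non-symmetry/uniqueness bookkeeping around the smallest semigroups, where a naive truncation can accidentally turn out symmetric (for instance $S=\{0,3,5,\rightarrow\}$, whose $\boxplus_D$-truncation is $\{0,2,\rightarrow\}$): making the case division clean so that the truncation used in each case is \emph{provably} non-symmetric, and checking that the two cases are genuinely exhaustive, is the delicate step. The verification that $T$ is a semigroup and the decomposition identity itself are routine once Remark \ref{rps1} and Lemma \ref{l2} are in hand.
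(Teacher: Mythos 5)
Your existence argument is essentially the paper's own proof: the same dichotomy on whether $\frac{F(S)}{2}+1\in S$, the same two truncations $T$ (with conductor $\frac{F(S)}{2}$ feeding into $\boxplus_{D}$, respectively conductor $\frac{F(S)}{2}+1$ feeding into $\boxplus_{C}$), and the same verification that the sum reproduces $S$ via Remark \ref{rps1}. You are in fact more careful than the paper in two places it waves through: you check that $T$ is a semigroup (the paper says ``obviously''), and your non-symmetry argument via Lemma \ref{ls2} is a direct unwinding of what the paper gets by citing Corollary \ref{cps1}. All of that part is fine.

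The genuine gap is in your uniqueness paragraph, at the step ``the non-symmetry requirement selects exactly one of them.'' That is not true, and no argument you give supports it. Take $S=\{0,4,8,9,11,12,13,15,\rightarrow\}$ (the paper's own example with $F(S)=14$ and $\frac{F(S)}{2}+1=8\in S$). The $\boxplus_{D}$-truncation is $T_{1}=\{0,4,7,\rightarrow\}$ with $T_{1}\boxplus_{D}T_{1}^{*}=S$, but the $\boxplus_{C}$-truncation $T_{2}=\{0,4,8,\rightarrow\}$ is also a numerical semigroup, with $T_{2}^{*}=\{0,1,2,4,5,6,8,\rightarrow\}$ and $T_{2}\boxplus_{C}T_{2}^{*}=\{0,4,1{+}7,2{+}7,4{+}7,5{+}7,6{+}7,8{+}7,\rightarrow\}=S$; and both $T_{1}$ (Frobenius number $6$, even) and $T_{2}$ (Frobenius number $7$, genus $6\neq 4$) are non-symmetric. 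Indeed, whenever $\frac{F(S)}{2}+1\in S$ the pseudo-symmetry of $S$ forces the $\boxplus_{C}$-truncation to work as well, so two distinct admissible $T$'s exist. So your closing step cannot be repaired as written; to be fair, the paper's proof never addresses uniqueness at all, and this example shows the uniqueness clause of the statement itself is problematic rather than something your (or the paper's) construction merely fails to verify. You correctly flagged the delicate point; it just turns out to be worse than ``delicate.''
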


\begin{proof}
	One can easily check that $\{ 0,3,\rightarrow \}=\{ 0,2,\rightarrow \}\boxplus_{D}\{ 0,2,\rightarrow \}$ and $\{ 0,2,\rightarrow \}$ is symmetric. Suppose now that $S\neq\{ 0,3,\rightarrow \}$.
	By Remark \ref{rps1}, $S=\{ 0,s_{1},\dots,s_{k},s_{k+1},\dots,s_{n},\rightarrow \}$ with $s_{k}<\dfrac{F(S)}{2}<s_{k+1}$, $$G(S)=\{ a_{1},\dots,a_{n-k-1},\frac{F(S)}{2},a_{n-k},\dots,a_{n-2},a_{n-1}, F(S) \},$$ and $S=\{ 0, F(S)-a_{n-1},\dots,F(S)-a_{1},s_{n},\rightarrow \}$ where $s_{i}=F(S)-a_{n-i}$ for $i=1,2,\dots,n-1$. We now continue in two cases. First, if $\frac{F(S)}{2}+1 \notin S$, we choose $T=\{ 0,s_{1},\dots,s_{k},\frac{F(S)}{2}+1,\rightarrow \}$ which is obviously a numerical semigroup with $G(T)=\{ a_{1},\dots,a_{n-k-1},\frac{F(S)}{2} \}$ and $F(T)=\frac{F(S)}{2}$. Then $$T^{*}=\{ 0,F(T)-a_{n-k-1},\dots,F(T)-a_{1},F(T)+1,\rightarrow \}.$$ Therefore, by definition of conjoint sum we have
	\begin{align*}
	 T\boxplus_{C}T^{*}&=\{ 0,s_{1},\dots,s_{k},2F(T)-a_{n-k-1},\dots,2F(T)-a_{1},2F(T)+1, \rightarrow \}\\
	 &=\{ 0,s_{1},\dots,s_{k},F(S)-a_{n-k-1},\dots,F(S)-a_{1},F(S)+1, \rightarrow \}\\
	 &=\{ 0,s_{1},\dots,s_{k},s_{k+1},\dots,s_{n-1},s_{n}, \rightarrow \}=S
	 \end{align*}
 Now, if $\frac{F(S)}{2}+1 \in S$, then $s_{k+1}=\frac{F(S)}{2}+1$. In this case, we choose $T=\{ 0,s_{1},\dots,s_{k},\frac{F(S)}{2},\rightarrow \}$ which is obviously a numerical semigroup with $G(T)=\{ a_{1},\dots,a_{n-k-1} \}$ and $F(T)=\frac{F(S)}{2}-1=a_{n-k-1}$. Then $$T^{*}=\{ 0,F(T)-a_{n-k-2},\dots,F(T)-a_{1},F(T)+1,\rightarrow \}.$$ Therefore, by definition of discrete sum we have
 \begin{align*}
 	T\boxplus_{D}T^{*}=&\{ 0,s_{1},\dots,s_{k},F(T)+2,2F(T)-a_{n-k-2}+2,\dots\\&\hspace{4cm} \dots,2F(T)-a_{1}+2,2F(T)+3, \rightarrow \}\\
 	=&\{ 0,s_{1},\dots,s_{k},\frac{F(S)}{2}+1,F(S)-a_{n-k-2},\dots\\
 	&\hspace{5cm}\dots,F(S)-a_{1},F(S)+1, \rightarrow \}\\
 	=&\{ 0,s_{1},\dots,s_{k},s_{k+1},\dots,s_{n-1},s_{n}, \rightarrow \}=S
 \end{align*}
By Corollary \ref{cps1}, $T\boxplus_{C}T^{*}$ and $T\boxplus_{D}T^{*}$ are not numerical semigroups for a symmetric $T$. Hence, $T$ is not symmetric in any case.
\end{proof}

\begin{exam}
Let $S=\{ 0,6,7,11,12,13,14,15,17,\rightarrow \}$ with $G(S)=\{ 1,2,3,4,5,8,9,10,16 \}$ and $F(S)=16$, which is a  pseudo-symmetric numerical semigroup not containing $\frac{F(S)}{2}+1=9$. Therefore, by Theorem \ref{tps1}, $T=\{ 0,6,7,9,\rightarrow \}$ with $G(T)=\{ 1,2,3,4,5,8 \}$ and $T^{*}=\{ 0,3,4,5,6,7,9,\rightarrow \}$ where the conjoint sum of $T$ and $T^{*}$ is $S$, i.e. $T\boxplus_{C}T^{*}=S$. The Young diagram of $Y_{S}=Y_{T}\boxplus_{C}Y_{T^{*}}$ is shown as below.
\vspace{1em}
\begin{center}
	\tiny{\begin{tabular}{ccccc}
		\ydiagram[*(yellow) ]{3,1,1,1,1,1}&$\boxplus_{C}$&\ydiagram[*(red) ]{6,1,1}&$=$&\ydiagram[*(red) ]{2+6,2+1,2+1}*[*(yellow) ]{0,0,0,3,1,1,1,1,1}*[*(white) ]{8,3,3,3,1,1,1,1,1}\\
		$\underbrace{\hspace{4.8em}}_{\displaystyle Y_{T}}$&&$\underbrace{\hspace{9.6em}}_{\displaystyle Y_{T^{*}}}$&&$\underbrace{\hspace{12.8em}}_{\displaystyle Y_{S}=Y_{T}\boxplus_{C}Y_{T^{*}}}$
	\end{tabular}}
\end{center}
\vspace{1em}
\end{exam}

\begin{exam}
Let $S=\{ 0,4,8,9,11,12,13,15,\rightarrow \}$ with $F(S)=14$ and $G(S)=\{ 1,2,3,5,6,7,10,14 \}$, which is a  pseudo-symmetric numerical semigroup containing $\frac{F(S)}{2}+1=8$. Therefore, by Theorem \ref{tps1}, $T=\{ 0,4,7,\rightarrow \}$ with $G(T)=\{ 1,2,3,5,6 \}$ and $T^{*}=\{ 0,1,3,4,5,7,\rightarrow \}$ where the discrete sum of $T$ and $T^{*}$ is $S$, i.e. $T\boxplus_{D}T^{*}=S$. The Young diagram of $Y_{S}=Y_{T}\boxplus_{D}Y_{T^{*}}$ is shown as below.
\vspace{1em}
\begin{center}
	\tiny{\begin{tabular}{ccccc}
		\ydiagram[*(yellow) ]{2,2,1,1,1}&$\boxplus_{D}$&\ydiagram[*(red) ]{5,2}&$=$&\ydiagram[*(red) ]{2+5,2+2}*[*(orange) ]{0,0,2}*[*(yellow) ]{0,0,0,2,2,1,1,1}*[*(white) ]{7,4,2,2,1,1,1}\\
		$\underbrace{\hspace{3.2em}}_{\displaystyle Y_{T}}$&&$\underbrace{\hspace{8.0em}}_{\displaystyle Y_{T^{*}}}$&&$\underbrace{\hspace{11.2em}}_{\displaystyle Y_{S}=Y_{T}\boxplus_{D}Y_{T^{*}}}$
	\end{tabular}}
\end{center}
\vspace{1em}
\end{exam}

For a numerical semigroup $S$, next example shows that $S\boxplus_{E}S^{*}$ or $S\boxplus_{O}S^{*}$ is not always a symmetric numerical semigroup, and that $S\boxplus_{C}S^{*}$ or $S\boxplus_{D}S^{*}$ is not always a pseudo-symmetric numerical semigroup.

\begin{exam} \label{exam}
	Let $S=\{ 0,3,5,6,8,\rightarrow \}$ be a numerical semigroup with $G(S)=\{ 1,2,4,7 \}$ and $S^{*}=S$. Then
	$$S\boxplus_{D}S^{*}=\{ 0,3,5,6,9,12,14,15,17,\rightarrow \},$$
	$$S\boxplus_{E}S^{*}=\{ 0,3,5,6,8,11,13,14,16,\rightarrow \},$$
	$$S\boxplus_{C}S^{*}=\{ 0,3,5,6,10,12,13,15,\rightarrow \},$$ and
	$S\boxplus_{O}S^{*}=\{ 0,3,5,6,9,11,12,14,\rightarrow \}$ are not numerical semigroups.
\end{exam}

However, we are able to determine exactly for what kind of numerical semigroup $S$ we get $S\boxplus_{E}S^{*}$ or $S\boxplus_{O}S^{*}$ a symmetric numerical semigroup, and $S\boxplus_{C}S^{*}$ or $S\boxplus_{D}S^{*}$ a pseudo-symmetric numerical semigroup.

\begin{theorem} \label{thm}
	Let $S=\{0,s_{1},\dots,s_{n},\rightarrow\}$ be numerical semigroup with $G(S)=\{ a_{1},\dots,a_{m} \}$. Then
	\begin{enumerate}
	\item $S\boxplus_{D}S^{*}$ is a numerical semigroup if and only if $s_{n}$ is a minimal generator of $S$, and $2s_{n}-s_{i}-s_{j}\neq s_{k}$ $\forall i,j,k \in \{0,\dots,n-1\}$.
	\item $S\boxplus_{E}S^{*}$ is a numerical semigroup if and only if $2s_{n}-s_{i}-s_{j}-1\neq s_{k}$ $\forall i,j,k \in \{0,\dots,n-1\}$.
	\item $S\boxplus_{C}S^{*}$ is a numerical semigroup if and only if $s_{n}$ is a minimal generator of $S$, and $2a_{m}-s_{i}-s_{j}\neq s_{k}$ $\forall i,j,k \in \{0,\dots,n-1\}$.
	\item $S\boxplus_{O}S^{*}$ is a numerical semigroup if and only if $2a_{m}-s_{i}-s_{j}-1\neq s_{k}$ $\forall i,j,k \in \{0,\dots,n-1\}$.
	\end{enumerate}
\end{theorem}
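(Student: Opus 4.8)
The plan is to use the elementary criterion that a numerical set $X=\{0,x_{1},\dots,x_{r},\rightarrow\}$ is a numerical semigroup if and only if no sum of two of its \emph{small} elements is a gap of $X$; indeed a sum $u+v$ with $u\geq C(X)$ automatically lies in $X$, so only pairs of small elements impose conditions. Hence for each $S\boxplus_{\bullet}S^{*}$ the task is: compute its gap set, describe its small elements, and decide whether two of them can add up to a gap.

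The first ingredient is a clean description of the gaps of the dual. With $F=F(S)=a_{m}$, the definition of $S^{*}$ gives $S^{*}\cap[0,F]=\{0\}\cup\{F-a:a\in G(S),\ a<F\}$, and since for $t\in[1,F]$ one has $t\in S^{*}$ exactly when $F-t\in G(S)$ (equivalently $F-t\notin S$), this yields
$$G(S^{*})=\{\,F-s_{j}:j=0,1,\dots,n-1\,\},$$
a set of $n$ elements as predicted by Lemma \ref{l3}. Substituting this into Lemma \ref{l2} computes all four gap sets; for example $G(S\boxplus_{E}S^{*})=G(S)\cup\{\,2s_{n}-1-s_{j}:j=0,\dots,n-1\,\}$ and $G(S\boxplus_{O}S^{*})=\{a_{1},\dots,a_{m-1}\}\cup\{\,2a_{m}-1-s_{j}:j=0,\dots,n-1\,\}$, with the discrete and conjoint cases obtained the same way; in each case the two pieces lie in disjoint intervals, the first inside $[1,F]$, the second strictly above $F$.

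Next I would record that the small elements of $X=S\boxplus_{\bullet}S^{*}$ split into a ``lower block'' $L=\{0,s_{1},\dots,s_{n-1}\}$ coming from $S$ (to which $s_{n}$ is adjoined in the end-to-end case) and an ``upper block'' that is a shifted copy of the small elements of $S^{*}$: explicitly $X=L\cup(s_{n}+S^{*})$ for $\boxplus_{E}$, $X=L\cup\bigl((s_{n}-1)+(S^{*}\setminus\{0\})\bigr)$ for $\boxplus_{C}$, the same with $s_{n}-1$ replaced by $s_{n}-2$ for $\boxplus_{O}$, and $X=L\cup\bigl((s_{n}+1)+S^{*}\bigr)$ for $\boxplus_{D}$. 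Now classify a pair of small elements $\{u,v\}$ of $X$. If both are in the upper block the sum is at least $C(X)$, hence harmless. If $u$ is lower and $v$ upper, write $v=(\text{shift})+t$ with $t\in S^{*}$ below its conductor; using $t\in S^{*}\iff F-t\in G(S)$, the requirement ``$u+v\in G(X)$'' unwinds to ``$s_{i}+s_{l}\in G(S)$ for some small elements $s_{i},s_{l}$ of $S$'', which is impossible because $S$ is itself a numerical semigroup. Finally, if $u=s_{i}$ and $v=s_{j}$ are both lower: landing in the $G(S)$-part of $G(X)$ is again impossible, while landing in the second part of $G(X)$ translates, after the same unwinding, into precisely the stated Diophantine equation ($2s_{n}-s_{i}-s_{j}-1=s_{k}$ for $\boxplus_{E}$, $2a_{m}-s_{i}-s_{j}-1=s_{k}$ for $\boxplus_{O}$, and the analogues without the ``$-1$'' for $\boxplus_{D}$ and $\boxplus_{C}$). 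One also checks that such an equation forces $s_{i}+s_{j}>s_{n}$ — because $s_{k}\leq s_{n-1}\leq s_{n}-2$ — so that the shift defining the upper block is genuinely applied to an element of $S^{*}$; this makes the translation reversible, giving both directions of the ``if and only if''.

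The point that needs extra care is the conductor for the discrete and conjoint sums. In $S\boxplus_{D}S^{*}$ the integer $s_{n}$ is itself a gap, so if $s_{n}=s_{i}+s_{j}$ with $i,j\geq1$ — i.e. $s_{n}$ is not a minimal generator of $S$, as in Proposition \ref{p1} — then $\{s_{i},s_{j}\}$ is a bad pair; conversely, once $s_{n}$ is a minimal generator one has $s_{i}+s_{j}\neq s_{n}$ for all small $s_{i},s_{j}$, so every lower--lower sum reaching $s_{n}$ is in fact $\geq s_{n}+1$ and the analysis above goes through verbatim, which is exactly how the minimal-generator hypothesis enters part (1). The interaction between the conductor of $S$ and the gap pattern of $S\boxplus_{D}S^{*}$ (and the more delicate analogue for $\boxplus_{C}$) is where I expect the real work to lie; everything else is bookkeeping organised around the identity $G(S^{*})=\{F-s_{j}\}$ together with Lemma \ref{l2}.
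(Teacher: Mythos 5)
Your reduction to ``no two small elements of $X=S\boxplus_{\bullet}S^{*}$ may sum to a gap of $X$'', combined with the identity $G(S^{*})=\{F(S)-s_{j}:j=0,\dots,n-1\}$ and Lemma \ref{l2}, is a genuinely different route from the paper's, which instead computes the hook lengths $m_{ij}=2s_{n}-s_{i-1}-s_{j-1}$ (and the variants with $-1$ and with $a_{m}$) of the top-left $n\times n$ block of $Y_{S\boxplus_{\bullet}S^{*}}$ and applies Proposition \ref{prp}(2); the two arguments carry the same information, but yours is more transparent about \emph{which} pairs of elements can fail. Your three-way classification of pairs (upper--upper sums exceed the Frobenius number; a lower--upper sum being a gap would force $s_{i}+s_{k}\in G(S)$; lower--lower sums produce exactly the stated Diophantine conditions) is sound and fully settles parts (2) and (4), and also part (1), since $s_{n}=a_{m}+1$ is unconditionally a gap of $S\boxplus_{D}S^{*}$ by Lemma \ref{l2}(1), so the minimal-generator clause there is forced.

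The genuine gap is precisely the point you defer as ``the more delicate analogue for $\boxplus_{C}$''. For the conjoint sum there is no extra row, so $s_{n}=a_{m}+1$ is a gap of $S\boxplus_{C}S^{*}$ only if it equals $2a_{m}-s_{k}$ for some small $s_{k}$, i.e.\ only if $a_{m}-1\in S$; if instead $a_{m}-1=a_{m-1}\in G(S)$, then $s_{n}=2a_{m}-a_{m-1}$ reappears as a small element of the upper block and the pair $s_{i}+s_{j}=s_{n}$ is harmless. Hence the minimal-generator hypothesis cannot be extracted from the semigroup property of $S\boxplus_{C}S^{*}$, and part (3) as stated actually fails: take $S=\langle 3,7,8\rangle=\{0,3,6,\rightarrow\}$, so $G(S)=\{1,2,4,5\}$, $S^{*}=\{0,1,3,4,6,\rightarrow\}$ and $S\boxplus_{C}S^{*}=\{0,3,6,8,9,11,\rightarrow\}$, which is a (pseudo-symmetric) numerical semigroup although $s_{n}=6=3+3$ is not a minimal generator of $S$; the single condition $2a_{m}-s_{i}-s_{j}\neq s_{k}$ for all $i,j,k\in\{0,\dots,n-1\}$ is the correct criterion in that case. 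So your framework is the right one, but you must actually execute the $\boxplus_{C}$ case rather than asserting it parallels $\boxplus_{D}$: doing so honestly shows the equivalence in (3) needs the minimal-generator clause removed, a defect the paper's own proof conceals behind ``Proofs for (3) and (4) are similar''.
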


\begin{proof}
To prove the theorem, we will use Young diagram properties of $Y_{S}$ and $Y_{S^{*}}$. We will also use Proposition \ref{prp} repeatedly. Note that we have $S^{*}=\{ 0,a_{m}-a_{m-1},\dots,a_{m}-a_{1},s_{n},\rightarrow \}$.

Proof of (1): By definition, $S\boxplus_{D}S^{*}=\{ s_{1},\dots,s_{n-1},s_{n}+1,s_{n}+1+a_{m}-a_{m-1},\dots,s_{n}+1+a_{m}-a_{1},2s_{n}+1,\rightarrow \}$.
\begin{center}
	{\tiny
		\ytableausetup
		{mathmode, boxsize=5em}
		\begin{tabular}{ccccc}
			\begin{ytableau}
				m_{11} & \dots & m_{1j} & \dots & m_{1n}\\
				\vdots & \dots & \vdots & \dots & \vdots\\
				m_{i1} & \dots & m_{ij} & \dots & m_{in}\\
				\vdots & \dots & \vdots & \dots & \vdots\\
				m_{n1} & \dots & m_{nj} & \dots & m_{nn}\\
			\end{ytableau}&
			\begin{ytableau}
				a_{k}&\none[\dots]\\ \vdots&\none[\dots]\\ a_{m}- s_{i-1}&\none[\dots]\\ \vdots&\none[\dots]\\ a_{m}-s_{n-1}&\none[\dots]
			\end{ytableau}\\
			
			\begin{ytableau}
				s_{n}  & \dots & s_{n}-s_{j-1} & \dots & s_{n}-s_{n-1}
			\end{ytableau}\\
			\begin{ytableau}
				a_{k} & \dots & a_{k}- s_{j-1} & \dots & a_{k}-s_{n-1}\\
				\none[\vdots]& \none[\vdots]& \none[\vdots]&\none[\vdots]&\none[\vdots]
			\end{ytableau}
	\end{tabular}}	
\end{center}
The picture above illustrates the Young diagram of $S\boxplus_{D}S^{*}$. The right side of it continues with the Young diagram of $S^{*}$, and the bottom side continues with the Young diagram of $S$. We label the hook lengths of boxes in top left $n\times n$ matrix shaped part of $Y_{S\boxplus_{D}S^{*}}$ with $m_{ij}$. By proposition \ref{prp}, $m_{1j}=2s_{n}-s_{j-1}$ where $j=1,\dots n$. Let $c_{1},\dots,c_{n}$ be the lengths of first $n$ columns of $Y_{S\boxplus_{D}S^{*}}$, and let $r_{1},\dots,r_{n}$ be the lengths of first $n$ rows of $Y_{S\boxplus_{D}S^{*}}$. It is easy to see that $r_{i}+1=c_{i}$ for each $i=1,\dots,n$. Since $r_{i}-r_{i+1}=c_{i}-c_{i+1}$, we have $m_{i1}=2s_{n}-s_{i-1}$ where $i=1,\dots,n$. This means that $$G(S\boxplus_{D}S^{*})=\{ a_{1},\dots,a_{k},s_{n},2s_{n}-s_{n-1},\dots,2s_{n}-s_{1},2s_{n}\}.$$ Now, if we delete the first row of $Y_{S\boxplus_{D}S^{*}}$, we get another Young diagram representing a numerical set whose gaps are $a_{1},\dots,a_{k},s_{n},2s_{n}-s_{n-1},\dots,2s_{n}-s_{1}$. Thus, $m_{2j}=2s_{n}-s_{1}-s_{j-1}$ where $j=1,\dots,n$. If we continue in a similar way, we get all $m_{ij}=2s_{n}-s_{i-1}-s_{j-1}$.

By proposition \ref{prp} again, $S\boxplus_{D}S^{*}$ is a numerical semigroup if and only if hook lengths of all boxes in $Y_{S\boxplus_{D}S^{*}}$ are in $G(S\boxplus_{D}S^{*})$. Since $S$ is a numerical semigroup, hook lengths of boxes after $n$th columns and after $(n+1)$th rows are in $G(S)\subseteq G(S\boxplus_{D}S^{*})$. Therefore, we only need to check boxes in $(n+1)$th row and boxes labelled with $m_{ij}$.

Hence, $S\boxplus_{D}S^{*}$ is a numerical semigroup if and only if $2s_{n}-s_{i}-s_{j}\in G(S\boxplus_{D}S^{*})$ and $s_{n}-s_{i}\in G(S\boxplus_{D}S^{*})$ for all $i,j\in\{0,\dots,n-1\}$. That is, $S\boxplus_{D}S^{*}$ is a numerical semigroup if and only if $s_{n}-s_{i}\neq s_{j}$ for all $i,j \in \{0,\dots,n-1 \}$, $2s_{n}-s_{i}-s_{j}\neq s_{k}$ and $2s_{n}-s_{i}-s_{j}\neq s_{n}+1+a_{m}-a_{l}$ for all $i,j,k\in\{0,\dots,n-1\}, l=1,\dots,m$. On the other hand, for all $i,j \in \{0,\dots,n-1 \}$, we have $$s_{n}-s_{i}\neq s_{j} \Longleftrightarrow s_{n}\neq s_{i}+s_{j} \Longleftrightarrow s_{n} \text{ is a minimal generator of } S.$$
Furthermore, since $S$ is a numerical semigroup, for all $i,j\in\{0,\dots,n-1\}$ and $l=1,\dots,m$, we have $$2s_{n}-s_{i}-s_{j}\neq s_{n}+1+a_{m}-a_{l} \Longleftrightarrow s_{i}+s_{j}\neq a_{l}.$$ Therefore, $S\boxplus_{D}S^{*}$ is a numerical semigroup if and only if $s_{n}$ is a minimal generator of $S$ and $2s_{n}-s_{i}-s_{j}\neq s_{k}$ for all $i,j,k\in\{0,\dots,n-1\}$.

Proof of (2): By definition, $S\boxplus_{E}S^{*}=\{ s_{1},\dots,s_{n-1},s_{n},s_{n}+a_{m}-a_{m-1},\dots,s_{n}+a_{m}-a_{1},2s_{n},\rightarrow \}$. Following a similar way that we built in proof of (1), we get $m_{ij}=2s_{n}-s_{i-1}-s_{j-1}-1$ for all $i,j\in\{1,\dots,n\}$, and $$G(S\boxplus_{E}S^{*})=\{ a_{1},\dots,a_{k},s_{n},2s_{n}-s_{n-1}-1,\dots,2s_{n}-s_{1}-1,2s_{n}-1\}.$$ Notice that we do not have an extra row in $Y_{S\boxplus_{E}S^{*}}$. Therefore, we only need to check all $m_{ij}$.

Hence $S\boxplus_{E}S^{*}$ is a numerical semigroup if and only if $2s_{n}-s_{i}-s_{j}-1\in G(S\boxplus_{E}S^{*})$ for all $i,j\in\{0,\dots,n-1\}$. That is, $S\boxplus_{E}S^{*}$ is a numerical semigroup if and only if $2s_{n}-s_{i}-s_{j}-1\neq s_{k}$ and $2s_{n}-s_{i}-s_{j}-1\neq s_{n}+a_{m}-a_{l}$ for all $i,j,k\in\{0,\dots,n-1\}, l=1,\dots,m$. Furthermore, for all $i,j,\in\{0,\dots,n-1\}$ and $l=1,\dots,m$, we have $$2s_{n}-s_{i}-s_{j}-1\neq s_{n}+a_{m}-a_{l} \Longleftrightarrow s_{i}+s_{j}\neq a_{l}.$$ However, $s_{i}+s_{j}\neq a_{l}$ is always true since $S$ is a numerical semigroup. Therefore, $S\boxplus_{D}S^{*}$ is a numerical semigroup if and only if $2s_{n}-s_{i}-s_{j}-1\neq s_{k}$ for all $i,j,k\in\{0,\dots,n-1\}$.

Proofs for (3) and (4) are similar.

\end{proof}

\begin{corollary} \label{pr1}
	Let $S=\{ 0,s_{1},\dots,s_{n} \}$ be a numerical semigroup with $G(S)=\{ a_{1},\dots,a_{m} \}$.
	\begin{enumerate}
		\item If $2s_{n}-s_{i}-s_{j}-1\neq s_{k}$ $\forall i,j,k \in \{0,\dots,n-1\}$, then $S\boxplus_{E}S^{*}$ is a symmetric numerical semigroup.
		\item If $2a_{m}-s_{i}-s_{j}-1\neq s_{k}$ $\forall i,j,l \in \{0,\dots,n-1\}$, then $S\boxplus_{O}S^{*}$ is a symmetric numerical semigroup.
		\item If $s_{n}$ is a minimal generator of $S$, and $2s_{n}-s_{i}-s_{j}\neq s_{k}$ $\forall i,j,k \in \{0,\dots,n-1\}$, then $S\boxplus_{D}S^{*}$ is a pseudo-symmetric numerical semigroup.
		\item If $s_{n}$ is a minimal generator of $S$, and $2a_{m}-s_{i}-s_{j}\neq s_{k}$ $\forall i,j,k \in \{0,\dots,n-1\}$, then $S\boxplus_{C}S^{*}$ is a pseudo-symmetric numerical semigroup.
	\end{enumerate} 
\end{corollary}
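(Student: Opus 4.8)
The plan is to reduce everything to Theorem \ref{thm} together with the genus characterizations of Proposition \ref{pps1}. Observe first that the hypotheses in parts (1), (2), (3), (4) of the corollary are precisely the conditions appearing in parts (2), (4), (1), (3) of Theorem \ref{thm}, respectively. Hence Theorem \ref{thm} already tells us that $S\boxplus_{E}S^{*}$, $S\boxplus_{O}S^{*}$, $S\boxplus_{D}S^{*}$, and $S\boxplus_{C}S^{*}$ are numerical semigroups under the stated assumptions, so the only thing left to prove is that the end-to-end and overlap sums are \emph{symmetric} and that the discrete and conjoint sums are \emph{pseudo-symmetric}. For this I would invoke Proposition \ref{pps1}: a numerical semigroup $U$ is symmetric iff $g(U)=\frac{F(U)+1}{2}$ and pseudo-symmetric iff $g(U)=\frac{F(U)+2}{2}$.

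Next I would record the Frobenius numbers. Write $G(S)=\{a_{1},\dots,a_{m}\}$ with $a_{m}=F(S)$. From the definition of the dual, $F(S^{*})=F(S)=a_{m}$, so the largest gap of $S^{*}$ is again $a_{m}$. Applying Lemma \ref{l2} with $T=S^{*}$ (so that both ``$a_{k}$'' and ``$b_{l}$'' there equal $a_{m}$) gives $F(S\boxplus_{D}S^{*})=2a_{m}+2$, $F(S\boxplus_{E}S^{*})=2a_{m}+1$, $F(S\boxplus_{C}S^{*})=2a_{m}$, and $F(S\boxplus_{O}S^{*})=2a_{m}-1$. In particular the parity requirement in Proposition \ref{pps1} is automatically met: the discrete and conjoint sums have even Frobenius number, while the end-to-end and overlap sums have odd Frobenius number.

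Then I would compute the genera. By Lemma \ref{l3}(2)--(3) we have $g(S\boxplus_{E}S^{*})=s_{n}$, $g(S\boxplus_{O}S^{*})=s_{n}-1$, $g(S\boxplus_{D}S^{*})=s_{n}+1$, and $g(S\boxplus_{C}S^{*})=s_{n}$. Substituting $s_{n}=C(S)=F(S)+1=a_{m}+1$ turns these into $g(S\boxplus_{D}S^{*})=a_{m}+2$, $g(S\boxplus_{E}S^{*})=a_{m}+1$, $g(S\boxplus_{C}S^{*})=a_{m}+1$, and $g(S\boxplus_{O}S^{*})=a_{m}$. Comparing with the Frobenius numbers above yields $g(S\boxplus_{D}S^{*})=\frac{F(S\boxplus_{D}S^{*})+2}{2}$ and $g(S\boxplus_{C}S^{*})=\frac{F(S\boxplus_{C}S^{*})+2}{2}$, while $g(S\boxplus_{E}S^{*})=\frac{F(S\boxplus_{E}S^{*})+1}{2}$ and $g(S\boxplus_{O}S^{*})=\frac{F(S\boxplus_{O}S^{*})+1}{2}$. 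By Proposition \ref{pps1} this establishes (3) and (4), and (1) and (2), respectively.

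I do not expect a substantial obstacle here: the real content is entirely contained in Theorem \ref{thm}, and the remainder is a parity-and-counting verification. The one place that demands a little care is feeding Lemma \ref{l2} correctly, namely identifying the largest gap of $S^{*}$ with $F(S^{*})=F(S)=a_{m}$ and keeping the two gap-indexings (those of $S$ and those of $S^{*}$) apart; once this is done the formulas fall out immediately.
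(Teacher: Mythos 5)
Your proof is correct and follows essentially the same route as the paper: Theorem \ref{thm} gives that the four sums are numerical semigroups, the Frobenius numbers are read off (your values $2a_{m}+2$, $2a_{m}+1$, $2a_{m}$, $2a_{m}-1$ agree with the paper's $2s_{n}$, $2s_{n}-1$, $2s_{n}-2$, $2s_{n}-3$ since $s_{n}=a_{m}+1$), and Lemma \ref{l3} together with Proposition \ref{pps1} finishes the genus count. No gaps.
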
 

\begin{proof}
By Theorem \ref{thm}, we know that if we have these assumptions are satisfied, we get $S\boxplus_{E}S^{*}$, $S\boxplus_{O}S^{*}$, $S\boxplus_{D}S^{*}$ and $S\boxplus_{C}S^{*}$ are numerical semigroups.

By definitions of end-to-end and overlap sums, $F(S\boxplus_{E}S^{*})=2s_{n}-1$ and $F(S\boxplus_{O}S^{*})=2s_{n}-3$. Also by definitions of discrete and conjoint sums, $F(S\boxplus_{D}S^{*})=2s_{n}$ and $F(S\boxplus_{C}S^{*})=2s_{n}-2$. Then it follows by Lemma \ref{l3} and Proposition \ref{pps1}.
\end{proof}

\section{Ring Theoretic Correspondence}\label{sec5}

In this section, we give the ring theoretic correspondence of Corollary \ref{pr1}. Let $S$ be a numerical semigroup minimally generated by $\{ n_{1},\dots,n_{p} \}$, $\Bbbk$ be a field and $x$ be an indeterminate. Then $\Bbbk[S]=\Bbbk[x^{s} \mid s\in S]$ is a subring of the polynomial ring $\Bbbk[x]$, and it is called the semigroup ring of $S$ over $\Bbbk$. We then have $$\Bbbk[S]=\bigg\{ \sum_{0\leq s<\infty}a_{s}x^{s} \mid a_{s}=0 \text{ if } s\in G(S) \bigg\}$$ and $\mathfrak{m}=\langle x^{n_{1}},\dots,x^{n_{p}} \rangle$ is a maximal ideal of $\Bbbk[S]$.

As in the polynomail case, $\Bbbk[\![S]\!]=\Bbbk[\![x^{s} \mid s\in S]\!] $ is a subring of the power series ring $\Bbbk[\![x]\!]$, and it is usually called the semigroup ring associated to $S$. We also have $$\Bbbk[\![S]\!]=\bigg\{ \sum_{0\leq s\leq\infty}a_{s}x^{s} \mid a_{s}=0 \text{ if } s\in G(S) \bigg\}$$ is a local ring with the maximal ideal and $\mathfrak{m}=\langle x^{n_{1}},\dots,x^{n_{p}} \rangle$. Notice that $\Bbbk[\![S]\!]$ is $\mathfrak{m}$-adic completion of $\Bbbk[S]$. Next corollaries show the ring theoretic correspondences of the results in section \ref{sec4}.

\begin{corollary} 
	Let $R$ be $\Bbbk[S]$ or $\Bbbk[\![S]\!]$ for some numerical semigroup $S=\{ 0,s_{1},\dots,s_{n} \}$ with $G(S)=\{ a_{1},\dots,a_{m} \}$. If $2s_{n}-s_{i}-s_{j}-1\neq s_{k}$ or $2a_{m}-s_{i}-s_{j}-1\neq s_{k}$ for all $i,j,k \in \{0,\dots,n-1\}$, then $R$ has at least one Gorenstein subring whose value semigroup is $S\boxplus_{E}S^{*}$ or  $S\boxplus_{O}S^{*}$.
\end{corollary}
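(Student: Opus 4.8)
The plan is to combine Corollary~\ref{pr1}, Kunz's theorem~\cite{Ku}, and the evident embedding of semigroup rings attached to an inclusion of numerical semigroups. By hypothesis at least one of the two inequalities holds, so I would set $S'=S\boxplus_{E}S^{*}$ when $2s_{n}-s_{i}-s_{j}-1\neq s_{k}$ for all $i,j,k\in\{0,\dots,n-1\}$ and $S'=S\boxplus_{O}S^{*}$ when instead $2a_{m}-s_{i}-s_{j}-1\neq s_{k}$ for all $i,j,k$; by Corollary~\ref{pr1}(1)--(2), $S'$ is then a symmetric numerical semigroup. Put $A=\Bbbk[S']$ if $R=\Bbbk[S]$ and $A=\Bbbk[\![S']\!]$ if $R=\Bbbk[\![S]\!]$. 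It then suffices to check three things: that $S'\subseteq S$, so that $A$ is a subring of $R$; that the value semigroup of $A$ is $S'$; and that $A$ is Gorenstein.

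The second and third are essentially automatic. From the description of $\Bbbk[S']$ and $\Bbbk[\![S']\!]$ recalled at the beginning of Section~\ref{sec5}, the $x$-adic order of a nonzero element runs over exactly $S'$, so $A$ has value semigroup $S'$. For Gorenstein-ness I would invoke \cite{Ku}: $\Bbbk[\![S']\!]$ is a one-dimensional analytically irreducible Noetherian local domain — it is complete, it is a domain, and its integral closure $\Bbbk[\![x]\!]$ is a finite module over it — with symmetric value semigroup $S'$, hence it is Gorenstein; and $\Bbbk[S']$ is Gorenstein because it agrees with a localization of $\Bbbk[x]$ away from $\mathfrak m=\langle x^{n_{1}},\dots,x^{n_{p}}\rangle$, while $(\Bbbk[S'])_{\mathfrak m}$ has $\mathfrak m$-adic completion $\Bbbk[\![S']\!]$. (Alternatively, one invokes the standard equivalence: $\Bbbk[S']$ Gorenstein iff $\Bbbk[\![S']\!]$ Gorenstein iff $S'$ symmetric.)

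The real content is therefore the inclusion $S'\subseteq S$, equivalently $G(S)\subseteq G(S')$. In the end-to-end case this is free: writing $G(S^{*})=\{b_{1},\dots,b_{l}\}$, Lemma~\ref{l2}(2) gives $G(S\boxplus_{E}S^{*})=\{a_{1},\dots,a_{m},a_{m}+b_{1}+1,\dots,a_{m}+b_{l}+1\}\supseteq\{a_{1},\dots,a_{m}\}=G(S)$, so $S\boxplus_{E}S^{*}\subseteq S$ and the argument closes. In the overlap case Lemma~\ref{l2}(4) only gives $G(S\boxplus_{O}S^{*})=\{a_{1},\dots,a_{m-1},a_{m}+b_{1}-1,\dots,a_{m}+b_{l}-1\}$, and this contains $a_{m}=F(S)$ exactly when the least gap $b_{1}$ of $S^{*}$ equals $1$; since $1\in S^{*}$ iff $a_{m}-1\in G(S)$, this occurs precisely when $F(S)-1\in S$.

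I expect this last point to be the main obstacle. Under the extra condition $F(S)-1\in S$ (equivalently $s_{n-1}=F(S)-1$) the overlap case runs verbatim like the end-to-end one; but when $F(S)-1\notin S$ one has $F(S)\in S\boxplus_{O}S^{*}\setminus S$, so $\Bbbk[\![S\boxplus_{O}S^{*}]\!]$ cannot be realised as a subring of $R$ and the overlap half of the statement needs either this proviso or a different choice of subring. I would thus organise the proof as: (1) obtain the symmetry of $S'$ from Corollary~\ref{pr1}; (2) verify $S'\subseteq S$ — immediate for $S\boxplus_{E}S^{*}$, and via the condition $b_{1}=1$ for $S\boxplus_{O}S^{*}$; (3) conclude with \cite{Ku} that $A$ is a Gorenstein subring of $R$ with value semigroup $S'$.
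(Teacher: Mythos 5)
Your proposal follows the same core route as the paper --- Corollary \ref{pr1} to get symmetry of $S'$, then Kunz's theorem \cite{Ku} to pass to Gorensteinness --- but you are considerably more careful: the paper's entire proof consists of those two citations and never checks that the ring produced is actually a \emph{subring} of $R$, i.e.\ that $S'\subseteq S$. Your verification of this containment for the end-to-end sum, via $G(S)\subseteq G(S\boxplus_{E}S^{*})$ from Lemma \ref{l2}(2), is correct and is precisely the step the paper omits.

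The obstacle you isolate in the overlap case is genuine, and it is in fact fatal to the statement as written, not merely to one way of proving it. Take $S=\{0,8,11,14,\rightarrow\}$, so that $G(S)=\{1,\dots,7,9,10,12,13\}$, $F(S)=13$ and $F(S)-1=12\notin S$. The small elements are $0,8,11$, and $2a_{m}-s_{i}-s_{j}-1=25-s_{i}-s_{j}$ takes only the values $25,17,14,9,6,3$, none of which is a small element; so the hypothesis of the corollary holds through its second disjunct, and by Corollary \ref{pr1}(2) the set $S\boxplus_{O}S^{*}=\{0,8,11,13,15,16,18,\dots\}$ is indeed a symmetric numerical semigroup. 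But $13=F(S)$ belongs to $S\boxplus_{O}S^{*}$ and not to $S$, and since the value semigroup of any subring of $R$ is contained in $S$, no subring of $R$ can have value semigroup $S\boxplus_{O}S^{*}$. Moreover $2s_{n}-s_{1}-s_{1}-1=11=s_{2}$, so the first disjunct of the hypothesis fails and $S\boxplus_{E}S^{*}$ is not even a numerical semigroup by Theorem \ref{thm}(2); the conclusion therefore fails outright for this $S$. The corollary needs exactly the proviso you propose in the overlap case ($F(S)-1\in S$, equivalently $b_{1}=1$), and subject to adding that hypothesis your argument is complete and correct.
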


\begin{proof}
	Due to \cite{Ku}, we know that $R$ is Gorenstein if and only if $S$ is symmetric. Then by Corollary \ref{pr1}, it is straightforward.
\end{proof}

\begin{corollary}
Let $S=\{ 0,s_{1},\dots,s_{n} \}$ be a numerical semigroup with $G(S)=\{ a_{1},\dots,a_{m} \}$. If $s_{n}$ is a minimal generator of $S$, and $2s_{n}-s_{i}-s_{j}\neq s_{k}$ or $2a_{m}-s_{i}-s_{j}\neq s_{k}$ for all $i,j,k \in \{0,\dots,n-1\}$, then $\Bbbk[\![S]\!]$ has at least one Kunz subring which is $\Bbbk[\![S\boxplus_{D}S^{*}]\!]$ or $\Bbbk[\![S\boxplus_{C}S^{*}]\!]$.
\end{corollary}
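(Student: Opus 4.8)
The plan is to deduce the statement from Corollary \ref{pr1} together with the ring-theoretic characterization of Kunz rings recalled in the introduction: by \cite{BDF}, for a numerical semigroup $T$ the ring $\Bbbk[\![T]\!]$ is a Kunz ring if and only if $T$ is pseudo-symmetric. Beyond quoting these two facts, the only point requiring genuine (if routine) verification is that the semigroup produced by the sum operation actually sits inside $S$, so that its semigroup ring really is a subring of $\Bbbk[\![S]\!]$.

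First I would unwind the disjunction in the hypothesis. If $s_{n}$ is a minimal generator of $S$ and $2s_{n}-s_{i}-s_{j}\neq s_{k}$ for all $i,j,k\in\{0,\dots,n-1\}$, then Corollary \ref{pr1}(3) gives that $T:=S\boxplus_{D}S^{*}$ is pseudo-symmetric; if instead $s_{n}$ is a minimal generator of $S$ and $2a_{m}-s_{i}-s_{j}\neq s_{k}$ for all such $i,j,k$, then Corollary \ref{pr1}(4) gives that $T:=S\boxplus_{C}S^{*}$ is pseudo-symmetric. Either way we obtain a pseudo-symmetric numerical semigroup $T$ equal to $S\boxplus_{D}S^{*}$ or to $S\boxplus_{C}S^{*}$, so $\Bbbk[\![T]\!]$ is a Kunz ring by \cite{BDF}.

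Next I would check the containment $T\subseteq S$, which yields $\Bbbk[\![T]\!]\subseteq\Bbbk[\![S]\!]$. Writing $S^{*}=\{0,a_{m}-a_{m-1},\dots,a_{m}-a_{1},s_{n},\rightarrow\}$ and using the defining list of $\boxplus_{D}$ (respectively $\boxplus_{C}$), the small elements of $T$ below its conductor are $0,s_{1},\dots,s_{n-1}$ — which are small elements of $S$ and hence lie in $S$ — together with $s_{n}+1$ and the numbers $(a_{m}-a_{l})+s_{n}+1$ (respectively the numbers $(a_{m}-a_{l})+s_{n}-1$, with $s_{n}+1$ absent); since $a_{m}-a_{l}\geq 1$ for every relevant $l$, each of these is at least $s_{n}=C(S)$ and therefore lies in $S$. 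Moreover, by Lemma \ref{l2} the conductor of $T$ is $2a_{m}+3$ (respectively $2a_{m}+1$), which exceeds $C(S)=a_{m}+1$, so every integer $\geq C(T)$ lies in $S$ as well. Hence $T\subseteq S$, so $\Bbbk[\![T]\!]$ is a subring of $\Bbbk[\![S]\!]$, and it is the asserted Kunz subring, equal to $\Bbbk[\![S\boxplus_{D}S^{*}]\!]$ or $\Bbbk[\![S\boxplus_{C}S^{*}]\!]$.

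I expect no serious obstacle: the argument is essentially an assembly of Corollary \ref{pr1} and the characterization from \cite{BDF}, and the only step calling for care is the containment $T\subseteq S$ — specifically, confirming that the small elements of $T$ newly introduced by the sum operation all lie past the conductor of $S$ and hence belong to $S$.
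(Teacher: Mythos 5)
Your proof is correct and follows the same route as the paper: combine Corollary \ref{pr1} with the characterization from \cite{BDF} that $\Bbbk[\![T]\!]$ is a Kunz ring precisely when $T$ is pseudo-symmetric. The only difference is that you also verify the containment $S\boxplus_{D}S^{*}\subseteq S$ (resp.\ $S\boxplus_{C}S^{*}\subseteq S$), a point the paper's proof leaves implicit; your verification is accurate and is a worthwhile addition.
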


\begin{proof}
Due to \cite{BDF}, we know that $\Bbbk[\![S]\!]$ is a Kunz domain if and only if $S$ is pseudo-symmetric. Then by Corollary \ref{pr1}, it is straightforward.
\end{proof}

%\subsection*{Acknowledgements} The authors thank 

\end{document}